\newtheorem{theorem}{Theorem}[section]
\newtheorem{proposition}[theorem]{Proposition}
\newtheorem{corollary}[theorem]{Corollary}
\newtheorem{claim}{Claim}
\newtheorem{question}{Question}
\theoremstyle{definition}
\newtheorem{definition}[theorem]{Definition}
\theoremstyle{remark}
\newtheorem{remark}[theorem]{Remark}
\newtheorem{example}[theorem]{Example}
\newcommand{\abs}[1]{\left\vert#1\right\vert}  
\newcommand{\dist}{\operatorname{dist}}   
\newcommand{\CAT}{\operatorname{CAT}}
\newcommand{\N}{\mathbb{N}}                                            
\newcommand{\R}{\mathbb{R}}                                         
\newcommand{\eps}{\varepsilon} 
\begin{document}

\title{Geodesic rays, the ``Lion-Man'' game, and the fixed point property}

\author{Genaro L\'{o}pez-Acedo$^{a}$, Adriana Nicolae$^{b}$, Bo\.{z}ena Pi\k{a}tek$^{c}$}
\date{}
\maketitle

\begin{center}
{\footnotesize
$^{a}$Department of Mathematical Analysis - IMUS, University of Seville, Sevilla, Spain\\
$^{b}$Department of Mathematics, Babe\c s-Bolyai University, Kog\u alniceanu 1, 400084 Cluj-Napoca, Romania\\
$^{c}$Institute of Mathematics, Silesian University of Technology, 44-100 Gliwice, Poland\\
\ \\
E-mail addresses:  glopez@us.es (G. L\'{o}pez-Acedo), anicolae@math.ubbcluj.ro (A. Nicolae), Bozena.Piatek@polsl.pl (B. Pi\k{a}tek).
}
\end{center}

\begin{abstract}
This paper focuses on the relation among the existence of different types of curves (such as directional ones, quasi-geodesic or geodesic rays), the (approximate) fixed point property for nonexpansive mappings, and a discrete lion and man game. Our main result holds in the setting of $\CAT(0)$ spaces that are additionally Gromov hyperbolic.
\end{abstract}

\section{Introduction}
Given a metric space $X$, the existence of either a geodesic ray in $X$ or, more generally, of a curve that approximates a geodesic ray (e.g., a directional curve or a quasi-geodesic ray, see Section \ref{section-rays} for precise definitions), is connected to some intrinsic topological and geometric properties of the space. Such properties include the presence of upper or lower curvature bounds in the sense of Alexandrov \cite{Bri99,EspPia13,LopPia15}, the Gromov hyperbolicity condition \cite{Bri99}, the betwenness property \cite{LopNicPia18}, or, for normed spaces, the reflexivity \cite{Sha90}. On the other hand, the existence of different types of curves can be used in the analysis of several problems among which we mention the characterization of the fixed point property either for continuous \cite{Klee55,Ki04,LopPia15} or nonexpansive mappings \cite{Sha90,EspPia13,Pia15}, and some pursuit-evasion games \cite{AleBisGhr10,LopNicPia18}. 

Geometric properties of a set stand behind the existence of fixed points for continuous or nonexpansive mappings defined on the set in question. This fact has prompted a very fruitful research direction because, among other reasons, it leads to challenging questions regarding the geometry of Banach spaces (see, e.g., the monographs \cite{ADL97, GoeKir90}). Klee \cite{Klee55} was a pioneer in the use of topological rays as a tool to characterize compactness of convex subsets of a locally convex metrizable linear topological space by means of the fixed point property for continuous mappings (see also \cite{Db97}). Counterparts of this result for geodesic metric spaces were proved in \cite{LopPia15,LopPia16}. The notion of directional curves was introduced in \cite{Sha90} to characterize the approximate fixed point property for nonexpansive mappings in convex subsets of a class of metric spaces which includes, in particular, Banach spaces or complete Busemann convex geodesic spaces.  Further results in this direction were proved in \cite{Ki04,Pia15}. Namely, the absence of geodesic rays from a closed and convex set is equivalent to its fixed point property for continuous mappings in complete $\R$-trees, while for the more general setting of complete $\CAT(\kappa)$ spaces with $\kappa < 0$, it is equivalent to its fixed point property for nonexpansive mappings.

Persuit-evasion problems are not only interesting because their analysis requires the use of tools from different areas of mathematics, but also because of their application in other disciplines such as robotics or the modeling of animal behavior. In a recent survey, Chung et al. \cite{ChHI11} classify persuit-evasion games based on the environment where the game is played, the information available to the players, the restrictions imposed on the players' motion, and the way of defining capture.   

The game that we consider in this work was proposed in \cite{AleBisGhr06,AleBisGhr10} (see also \cite{Bac12}) and is a discrete variant of the classical lion and man game which goes back to Rado (see \cite{Li86}). This discrete equal-speed game with an $\eps$-capture criterion, whose precise rules are described at the beginning of Section \ref{lion-game}, will be called in the sequel the Lion-Man game. A first step in the study of the role that geodesic rays play in the analysis of the Lion-Man game, and consequently its connection with the fixed point property, was given in \cite{LopNicPia18}, where it was shown that in complete, locally compact, uniquely geodesic spaces, assuming a strongly convex domain, the success of the lion, the fixed point property for continuous mappings, and the compactness of the domain are all equivalent (see also \cite{Yuf18}). Further results also from the quantitative point of view have been obtained very recently in \cite{KohLopNic18}. In this work we analyze the relation among the existence of different types of curves and deduce, in light of this analysis, connections among the solution of the Lion-Man game, the (approximate) fixed point property for nonexpansive mappings, and the existence of geodesic rays in the domain where the game is played.

The organization of the paper is as follows. After recalling in Section \ref{Preliminaries} some basic notions on geodesic metric spaces, we study in Section \ref{section-rays} different types of curves which are related to the (approximate) fixed point property for nonexpansive mappings and to the Lion-Man game. More precisely, we focus on directional curves, local quasi-geodesic, quasi-geodesic and geodesic rays. Of particular importance for our main result is Corollary \ref{cor-local-quasi-ray} which states, in broad lines, that in the setting of complete Busemann convex spaces that are additionally Gromov hyperbolic, the existence of a local quasi-geodesic ray implies the existence of a geodesic ray. Section \ref{lion-game} contains the main result of this work, Theorem \ref{main}, which shows that in the setting of complete $\CAT(0)$ spaces that are additionally Gromov hyperbolic, for a closed convex domain $A$, the following are equivalent: $A$ does not contain geodesic rays, $A$ has the fixed point property for nonexpansive mappings, and the lion always wins the Lion-Man game played in $A$. We complete the section with some comments and a result about the finite termination of the game in the setting of $\R$-trees.

\section{Preliminaries}\label{Preliminaries} 

We include here a brief despription of some of the notions and properties of geodesic metric spaces that we will use in the following sections (for a detailed discussion, see, e.g., \cite{Bri99,Pap05}). 

Let $(X,d)$ be a metric space. For $x \in X$ and a nonempty subset $A$ of $X$, we denote the {\it distance} of $x$ to $A$ by $\text{dist}(x,A) = \inf\{d(x,a) : a \in A\}$ and the {\it metric projection} of $x$ onto $A$ by $P_A(x)=\{ y \in A : d(x,y)=\mbox{dist}(x,A)\}$. 

Let $x,y \in X$. A {\it geodesic} joining $x$ and $y$ is a mapping $\gamma:[a,b]\subseteq {\mathbb R}\to X$ such that $\gamma(a)=x$, $\gamma(b)=y$ and $d(\gamma(s),\gamma(t))=\abs{s-t}$ for all $s,t\in[a,b]$. In this case we also say that $\gamma$ {\it starts from} $x$. The image $\gamma([a,b])$ of a geodesic $\gamma$ joining $x$ and $y$ is called a {\it geodesic segment} joining $x$ and $y$. A point $z\in X$ belongs to a geodesic segment joining $x$ and $y$ if and only if there exists $t\in [0,1]$ such that $d(z,x)= td(x,y)$ and $d(z,y)=(1-t)d(x,y)$. In this case $z = \gamma((1-t)a+tb)$, where $\gamma : [a, b] \to X$ is a geodesic joining $x$ and $y$ (that starts from $x$) and whose image is the geodesic segment in question. We denote a geodesic segment joining $x$ and $y$ by $[x,y]$. Note however that, in general, geodesic segments between two given points might not be unique. If every two points in $X$ are joined by a (unique) geodesic segment, we say that $X$ is a {\it (uniquely) geodesic space}. A subset $A$ of a geodesic space is {\it convex} if given two points in $A$, every geodesic segment joining them is contained in $A$.
 
 If in the definition of a geodesic, instead of the interval $[a,b]$, one considers $[0,\infty)$, then the image of $\gamma$ is called a {\it geodesic ray} with the remark that sometimes we also refer to the mapping $\gamma$ itself as a geodesic ray. We say that a subset of a geodesic space is {\it geodesically bounded} if it does not contain any geodesic ray. 

In normed spaces, algebraic segments are geodesic segments and half-lines are geodesic rays. Thus, every normed space is a geodesic space. Moreover, a normed space is uniquely geodesic if and only if it is strictly convex. In this case, the geodesic segments coincide with the algebraic segments, while the geodesic rays are precisely the half-lines.

A convex subset of a normed space $E$ is called {\it linearly bounded} if it has a bounded intersection with all the lines in $E$ (see, e.g., \cite{Rei83}). Note that in strictly convex normed spaces, the notions of linear boundedness and geodesic boundedness agree.

Suppose next that $(X,d)$ is a geodesic space. We say that $X$ is {\it Busemann convex} if given any two geodesics $\gamma:[a,b] \to X$ and $\sigma:[c,d] \to X$,
\[d(\gamma((1-t)a+tb),\sigma((1-t)c+td)) \le (1-t)d(\gamma(a),\sigma(c)) + td(\gamma(b),\sigma(d)) \quad \text{for any } t \in [0,1].\]
Every Busemann convex space is uniquely geodesic.  In addition, a normed space is Busemann convex if and only if it is strictly convex.

For $\kappa \in \mathbb{R}$, let $M^2_{\kappa}$ be the complete, simply connected $2$-dimensional Riemannian manifold of constant sectional curvature $\kappa$. In the sequel we assume that $\kappa \le 0$.

A {\it geodesic triangle} $\Delta = \Delta(x_1,x_2,x_3)$ in $X$ consists of three points $x_1, x_2,x_3 \in X$ (its {\it vertices}) and three geodesic segments (its {\it sides}) joining each pair of points. A {\it comparison triangle} for $\Delta$ is a triangle $\overline{\Delta} = \Delta(\overline{x}_1, \overline{x}_2, \overline{x}_3)$ in $M^2_{\kappa}$ with $d(x_i,x_j) = d_{M^2_{\kappa}}(\overline{x}_i,\overline{x}_j)$ for $i,j \in \{1,2,3\}$. For $\kappa$ fixed, comparison triangles of geodesic triangles always exist and are unique up to isometry. 

Let $\gamma:[a,b]\to X$ and $\sigma:[c,d] \to X$ be two nonconstant geodesics that start from the same point $x=\gamma(a)=\sigma(c)$. For $t \in (a,b]$, $s \in (c,d]$, and a geodesic triangle $\Delta(x,\gamma(t),\sigma(s))$, consider a comparison triangle $\overline{\Delta}(\overline{x}, \overline{\gamma(t)}, \overline{\sigma(s)})$ in $\mathbb{R}^2=M^2_0$ and denote its interior angle at $\overline{x}$ by $\overline{\angle}_{x}\left(\gamma(t),\sigma(s)\right)$. The {\it Alexandrov angle} $\angle(\gamma,\sigma)$ between the geodesics $\gamma$ and $\sigma$ is defined as 
\[\angle(\gamma,\sigma) = \limsup_{t,s \to 0}\overline{\angle}_{x}\left(\gamma(t),\sigma(s)\right).\] 
For $x,y,z \in X$ with $x \ne y$ and $x \ne z$, if both the points $x$ and $y$, and $x$ and $z$, are joined by a unique geodesic segment, then we also denote the corresponding Alexandrov angle by $\angle_x(y,z)$.

If $\gamma_1 , \gamma_2 ,\gamma_3$ are three geodesics that start from the same point, then  
\[\angle (\gamma_1,\gamma_2 ) \leq \angle (\gamma_1,\gamma_3 )+\angle (\gamma_3,\gamma_2 ).\]
In particular, let $\gamma:[a,b]\to X$ be a nonconstant geodesic and $c \in (a,b)$. Define $\gamma_1:[a,c]\to X$ by $\gamma_1(t)=\gamma(a+c-t)$ and $
\gamma_2:[c,b]\to X$ by $\gamma_2(t)=\gamma(t)$. If $\gamma_3$ is a nonconstant geodesic that starts from $\gamma(c)$, then $\angle(\gamma_1,\gamma_3) + \angle(\gamma_3,\gamma_2) \ge \pi$.

A geodesic triangle $\Delta$ in $X$ satisfies the {\it $\CAT(\kappa)$ inequality} if for every comparison triangle $\overline{\Delta}$ in $M^2_{\kappa}$ of $\Delta$ and for every $x,y \in \Delta$ we have
\[d(x,y) \le d_{M^2_{\kappa}}(\overline{x},\overline{y}),\]
where  $\overline{x},\overline{y} \in \overline{\Delta}$ are the comparison points of $x$ and $y$, i.e., if $x$ belongs to the side joining $x_i$ and $x_j$, then $\overline{x}$ belongs to the side joining  $\overline{x}_i$ and $\overline{x}_j$ such that $d(x_i,x) = d_{M^2_{\kappa}}(\overline{x}_i,\overline{x})$.

A {\it $\CAT(\kappa)$ space} is geodesic space where every geodesic triangle satisfies the CAT$(\kappa)$ inequality. $\CAT(\kappa)$ spaces are also known as spaces of {\it curvature bounded above} by $\kappa$ (in the sense of Alexandrov). In any $\CAT(\kappa)$ space there exists a unique geodesic joining each pair of points. 

An {\it ${\R}$-tree} is a uniquely geodesic space $X$ such that if $x,y,z \in X$ with $[y,x]\cap [x,z]=\{x\}$, then $[y,x]\cup [x,z]=[y,z]$. It is easily seen that a metric space is an ${\R}$-tree if and only if it is a $\CAT(\kappa)$ space for any real $\kappa \le 0$. 

Another geometric condition that plays an essential role in the sequel is that of $\delta$-hyperbolicity. There are several ways to introduce this concept and we follow here the one attributed to Rips (see \cite[p. 399]{Bri99}). Given $M \ge 0$, a geodesic triangle in a metric space is called {\it $M$-slim} if any of its sides is contained in the $M$-neighborhood of the union of the other two sides. A geodesic space $X$ is called {\it $\delta$-hyperbolic} for some $\delta\geq 0$ if every geodesic triangle in it is $\delta$-slim. If a geodesic space is $\delta$-hyperbolic for some $\delta \ge 0$, then it is also said to be {\it Gromov hyperbolic}. $\CAT(\kappa)$ spaces  with $\kappa <0$  are {\it $\delta$-hyperbolic}, where $\delta$ only depends on $\kappa$. Moreover, a geodesic space is an $\R$-tree if and only if it is $0$-hyperbolic. Note also that there exist $\CAT(0)$ spaces which are not $\delta$-hyperbolic such as Hilbert spaces. 
 
Given three points $x,y,z$ in a metric space, the {\it Gromov product} $(y|z)_x$ is the nonnegative number defined by
$$
(y|z)_x=\dfrac{1}{2}\left(d(x,y)+d(x,z)-d(y,z)\right).
$$
The following characterization of $\delta$-hyperbolicity is due Gromov and often used as an alternative definition (see \cite[Lemma 1.2.3 and Exercise 1.2.4]{BuySch07}).

\begin{proposition}\label{prop-gromov-charact}
A geodesic space $(X,d)$ is $\delta$-hyperbolic for some $\delta\geq 0$ if and only if there exists $\delta' \ge 0$ such that for all three points
$x,y,z \in X$, fixing any geodesic segments $[x,y]$ and $[x,z]$ joining $x$ and $y$, and $x$ and $z$, respectively, the following implication holds: if $y' \in [x,y]$ and $z' \in [x,z]$ are such that $d(x,y')=d(x,z')\leq (y|z)_x$, then $d(y',z') \leq \delta'$.
\end{proposition}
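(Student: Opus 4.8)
The plan is to prove the two implications separately, since they carry different constants. For the implication from the Gromov product condition to $\delta$-hyperbolicity I would argue directly. Assume the displayed property holds with some $\delta'\ge 0$, and let $\Delta(a,b,c)$ be an arbitrary geodesic triangle with sides $[a,b],[b,c],[c,a]$. Given $w\in[b,c]$, since $(a|c)_b+(a|b)_c=d(b,c)$ we have either $d(b,w)\le(a|c)_b$ or $d(c,w)\le(a|b)_c$. In the first case choose $w'\in[a,b]$ with $d(b,w')=d(b,w)$; applying the hypothesis with base point $b$ to the geodesics $[b,a]$ and $[b,c]$ yields $d(w,w')\le\delta'$, so $w$ lies within $\delta'$ of $[a,b]$. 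The second case is symmetric, with base point $c$, and puts a point of $[a,c]$ within $\delta'$ of $w$. As $w$ and the choice of side were arbitrary, $\Delta$ is $\delta'$-slim, hence $X$ is $\delta$-hyperbolic with $\delta=\delta'$.

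For the converse, assume every geodesic triangle is $\delta$-slim, fix $x,y,z$ together with geodesics $[x,y],[x,z]$, and also fix some geodesic $[y,z]$, obtaining a $\delta$-slim triangle. Put $p=(y|z)_x$ and let $m_y\in[x,y]$, $m_z\in[x,z]$ be the points at distance $p$ from $x$; these are well defined, as $p\le\min\{d(x,y),d(x,z)\}$. The core of the proof is the estimate $d(m_y,m_z)\le 4\delta$. By slimness, $m_y$ lies within $\delta$ of some point $u$ on $[x,z]$ or on $[y,z]$. If $u\in[x,z]$, then $|d(x,u)-p|\le\delta$, so $u$ and $m_z$ are points of the geodesic $[x,z]$ whose distances from $x$ differ by at most $\delta$, and therefore $d(m_y,m_z)\le 2\delta$. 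If $u\in[y,z]$, then, since $d(y,m_y)=d(x,y)-p=(x|z)_y$, we get $|d(y,u)-(x|z)_y|\le\delta$, so $m_y$ lies within $2\delta$ of the internal point $i\in[y,z]$ determined by $d(y,i)=(x|z)_y$ (equivalently $d(z,i)=(x|y)_z$). Running the same dichotomy for $m_z$, and using $d(z,m_z)=d(x,z)-p=(x|y)_z$: if the slimness witness of $m_z$ lies on $[x,y]$ then, as before, $d(m_y,m_z)\le 2\delta$, while if it lies on $[y,z]$ then $m_z$ lies within $2\delta$ of the same point $i$, whence $d(m_y,m_z)\le 4\delta$.

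It remains to pass to arbitrary $y'\in[x,y]$, $z'\in[x,z]$ with $d(x,y')=d(x,z')=t\le p$, say $y'=y_t$ and $z'=z_t$; since $t\le p$ we have $y_t\in[x,m_y]$ and $z_t\in[x,m_z]$. These two subsegments, together with a geodesic $[m_y,m_z]$ of length at most $4\delta$, form a $\delta$-slim triangle, so $y_t$ lies within $\delta$ of a point of $[x,m_z]$---giving $d(y_t,z_t)\le 2\delta$ exactly as above---or within $\delta$ of a point of $[m_y,m_z]$, which is then within $4\delta$ of $m_y$; the latter forces $p-t=d(y_t,m_y)\le 5\delta$, and hence $d(y_t,z_t)\le d(y_t,m_y)+d(m_y,m_z)+d(m_z,z_t)\le 14\delta$. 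Thus the Gromov product condition holds with $\delta'=14\delta$, the precise value being immaterial. I expect the main obstacle to be the estimate $d(m_y,m_z)\le 4\delta$: it rests on carefully tracking, through the slimness condition, whether the ``midpoints'' $m_y,m_z$ are pushed onto the two sides issuing from $x$, where distances along a single geodesic are exact differences, or onto the opposite side $[y,z]$, where the internal point $i$ has to be recognized as a common reference; once that is in hand, the passage to general $t$ is routine, the key observation being that proximity of $y_t$ to the short segment $[m_y,m_z]$ already forces $t$ to be close to $p$.
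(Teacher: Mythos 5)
Your argument is correct in both directions. Note that the paper does not actually prove this proposition: it is quoted as a known characterization with a pointer to \cite{BuySch07}, where the equivalence is routed through the four-point (Gromov product) definition of hyperbolicity. Your proof instead works directly with the Rips slim-triangle definition, which is the one the paper adopts, so it is arguably better matched to the surrounding text. The forward implication (Gromov product condition $\Rightarrow$ slimness) is clean: the identity $(a|c)_b+(a|b)_c=d(b,c)$ correctly splits $[b,c]$ into the two ranges where the hypothesis applies from the vertices $b$ and $c$, and the feasibility check $d(b,w)\le (a|c)_b\le\min\{d(a,b),d(b,c)\}$ is implicit but valid. In the converse, the estimate $d(m_y,m_z)\le 4\delta$ is handled properly: the case analysis over which side receives the slimness witness is exhaustive, and the identification of the common internal point $i$ via $d(y,i)=(x|z)_y$, $d(z,i)=(x|y)_z$ (which sum to $d(y,z)$) is exactly the right bookkeeping. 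The final reduction to general $t\le p$ via the auxiliary triangle on $[x,m_y]$, $[x,m_z]$, $[m_y,m_z]$ is also sound; one could shortcut it by noting that in the second sub-case $p-t\le 5\delta$ already gives the bound by the triangle inequality, which is what you do. The constant $14\delta$ is not optimal (the standard statement gives $4\delta$ or so with more care), but as you say the precise value is immaterial for the proposition as stated, which only asserts the existence of some $\delta'$.
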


\section{Geodesic rays and the fixed point property}\label{section-rays}

Let $(X,d)$ be a metric space and $A \subseteq X$. A mapping $T : A \to X$ is called {\it nonexpansive} if $d(Tx, Ty) \le d(x,y)$ for all $x,y \in A$. We say that $A$ has the {\it fixed point property} (FPP for short) if each nonexpansive mapping $T : A\to A$ has at least one fixed point, i.e., a point $x\in A$ such that $Tx=x$. A very well-known result from 1965 proved independently by Browder \cite{Bro65}, G\"{o}hde \cite{Goh65} and Kirk \cite{Kir65} says that every closed, convex and bounded subset of a Hilbert space has the FPP. In 1980, Ray \cite{Ray80} approached the converse problem and proved that boundedness is a necessary condition for a closed and convex subset of a Hilbert space to have the FPP. Similar results in the setting of Banach spaces can be found, e.g., in \cite{Dom12, Rei76, Rei80, Tak10}.

After the publication of the papers \cite{Kir03, Kir04} due to Kirk, geodesic metric spaces have called the attention of many authors working in metric fixed point theory. Especially relevant in the study of the FPP proved to be the existence of upper bounds on the curvature in the sense of Alexandrov. Since Hilbert spaces are the only Banach spaces which are $\CAT(0)$, it was natural to consider the question whether the Browder-G\"{o}hde-Kirk theorem and Ray's result mentioned before hold true in complete $\CAT(0)$ spaces. Regarding the Browder-G\"{o}hde-Kirk theorem, the answer is positive (see \cite{Kir03}), however Ray's result fails as there exist broad classes of $\CAT(0)$ spaces where a closed and convex set has the FPP if and only if it is geodesically bounded (and hence not necessarily bounded). Such examples include the complex Hilbert ball with the hyperbolic metric (see \cite[Theorems 25.2, 32.2]{GoeRei84} and \cite[Corollary 4.4]{Pia15}), complete ${\mathbb R}$-trees (see \cite[Theorem 4.3]{EspKir06}), or even complete $\CAT(\kappa)$ spaces with $\kappa < 0$ (see \cite[Corollary 4.2]{Pia15}). In fact, this characterization of the FPP in terms of geodesic boundedness holds in the setting of complete $\CAT(0)$ spaces that are additionally $\delta$-hyperbolic (see \cite[Corollary 3.2]{Pia18}). Other results related to the FPP of unbounded sets in geodesic spaces can be found in \cite{EspPia13, Pia17, Pia15}.

Another more general property which was considered in this line is defined as follows: we say that $A$ has the {\it approximate fixed point property} (AFPP for short) if $\inf\{d(x,Tx) : x\in A\}=0$ for every nonexpansive mapping $T : A\to A$. It is immediate that every closed, convex and bounded subset of a Banach space has the AFPP (see \cite[Lemma 3.1]{GoeKir90}). However, there exist unbounded, closed and convex sets that have the AFPP. Reich \cite{Rei83} showed that a closed and convex subset of a reflexive Banach space has the AFPP if and only if it is linearly bounded. Shafrir \cite{Sha90} used the notion of directional curve to characterize the AFPP of convex sets in a class of metric spaces which includes, in particular, Banach spaces or complete Busemann convex geodesic spaces. 

\begin{definition}
Let $(X,d)$ be a metric space. A curve $\gamma:[0,\infty)\to X$ is said to be {\it directional} if there exists $b\geq 0$ such that
$$
|s-t|-b\le d(\gamma(s),\gamma(t))\le |s-t|,
$$
for all $s,t\geq 0$. 

A subset of $X$ is called {\it directionally bounded} if it contains no directional curves.

A sequence $(x_n)$ in $X$ is said to be {\it directional} if the following two conditions hold:
\begin{enumerate}
\item $\displaystyle\lim_{n\to \infty} d(x_0,x_n) = \infty$;
\item  there exists $b\geq 0$ such that
\begin{equation}\label{dir-seq-cond2}
d(x_{n_1},x_{n_l}) \geq \sum_{i=1}^{l-1} d(x_{n_i},x_{n_{i+1}}) - b,
\end{equation}
for all $n_1 < n_2 <\cdots < n_l $.
\end{enumerate}
\end{definition}

Clearly, every geodesic ray is a directional curve, so directionally bounded sets are always geodesically bounded. Note also that a convex subset of a geodesic space is directionally bounded if and only if it does not contain any directional sequence (see \cite[Lemma 2.3]{Sha90}).

In Banach spaces or in complete Busemann convex spaces, a convex set has the AFPP if and only if it is directionally bounded (see \cite[Theorem 2.4]{Sha90}). Moreover, the directional boundedness can also be used to give a characterization of reflexivity in Banach spaces. Namely, a Banach space is reflexive if and only if every closed and convex subset of it that is linearly bounded is directionally bounded (see \cite[Proposition 3.5]{Sha90}). In the nonlinear case we have the following result (a corresponding one for the case of a Busemann convex space that is additionally $\delta$-hyperbolic is given in Proposition \ref{thm-geod-dir-bd-Busemann}).

\begin{proposition}\label{thm-geod-dir-bd}
If $(X,d)$ is a complete $\CAT(0)$ space, then every closed and convex subset of $X$ that is geodesically  bounded is directionally bounded.
\end{proposition}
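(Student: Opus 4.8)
The plan is to establish the contrapositive: assuming a closed convex set $A\subseteq X$ contains a directional curve, I will produce a geodesic ray inside $A$. Since $A$ is convex, by \cite[Lemma 2.3]{Sha90} it suffices to start from a \emph{directional sequence} $(x_n)\subseteq A$ with associated constant $b\ge 0$. Write $L_n:=d(x_0,x_n)$ and let $\sigma_n\colon[0,L_n]\to X$ be the geodesic from $x_0$ to $x_n$; it is contained in $A$ because $A$ is convex and $x_0,x_n\in A$. The heuristic is that directionality forces each triple $x_0,x_m,x_n$ (with $m<n$) to be almost collinear with $x_0$ at an endpoint, so that the segments $\sigma_n$ acquire an increasingly well-defined initial direction and converge pointwise to the desired ray. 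The elementary input, obtained by applying \eqref{dir-seq-cond2} to the triple $0<m<n$, is
\[
L_n\;\ge\;L_m+d(x_m,x_n)-b,\qquad\text{i.e.}\qquad (x_m|x_n)_{x_0}\;\ge\;L_m-\frac b2 .
\]

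The core of the proof is the following claim: for each fixed $R>0$, the sequence $\bigl(\sigma_n(R)\bigr)_n$ (defined for all large $n$, because $L_n\to\infty$ by condition (1)) is Cauchy. To prove it, take $m<n$ with $L_m,L_n$ large, a Euclidean comparison triangle $\overline{\Delta}(\overline{x_0},\overline{x_m},\overline{x_n})$ for $\Delta(x_0,x_m,x_n)$, and let $\alpha=\overline{\angle}_{x_0}(x_m,x_n)$ be its angle at $\overline{x_0}$. The inequality above gives $0\le d(x_m,x_n)\le L_n-L_m+b$, so the law of cosines in $\overline{\Delta}$ yields, after a short computation,
\[
1-\cos\alpha\;\le\;\frac{b}{L_m}+\frac{b^2}{2L_mL_n}.
\]
Now the comparison points of $\sigma_m(R)$ and of $\sigma_n(R)$ both lie at Euclidean distance $R$ from $\overline{x_0}$ on the two sides of $\overline{\Delta}$ emanating from $\overline{x_0}$, so the $\CAT(0)$ inequality gives
\[
d\bigl(\sigma_m(R),\sigma_n(R)\bigr)\;\le\;2R\sin\frac{\alpha}{2}\;=\;R\sqrt{2(1-\cos\alpha)}\;\le\;R\sqrt{\frac{2b}{L_m}+\frac{b^2}{L_mL_n}}.
\]
Since $L_m\to\infty$ as $m\to\infty$ and $L_n\ge L_m-b$, the right-hand side tends to $0$ uniformly in $n>m$, which proves the claim. (Alternatively, one can avoid the trigonometric identity by combining the $\CAT(0)$ inequality with Busemann convexity and Stewart's formula for the appropriate cevian of $\overline{\Delta}$; the resulting bound is of the same order.)

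Granting the claim, the rest is routine. By completeness of $X$ and closedness of $A$ the limit $\gamma(R):=\lim_n\sigma_n(R)$ exists and lies in $A$ for every $R\ge 0$, and $\gamma(0)=x_0$. For $R,S\ge 0$ and $n$ large enough that $L_n\ge\max\{R,S\}$ we have $d(\sigma_n(R),\sigma_n(S))=\abs{R-S}$ since $\sigma_n$ is a geodesic; letting $n\to\infty$ gives $d(\gamma(R),\gamma(S))=\abs{R-S}$. Hence $\gamma\colon[0,\infty)\to A$ is a geodesic ray contained in $A$, contradicting the geodesic boundedness of $A$.

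The step I expect to be the main obstacle is the Cauchy claim, that is, making quantitative the informal assertion that directionality forces the segments $[x_0,x_n]$ to agree better and better near $x_0$. The subtlety is that a general $\CAT(0)$ space need not be Gromov hyperbolic, so the thin-triangle characterization of Proposition \ref{prop-gromov-charact} is unavailable; what rescues the argument is that the Euclidean comparison still pins down the angle at $x_0$ sharply enough, and the near-additivity of distances measured from $x_0$ forces that angle to $0$. Everything afterwards is a routine passage to the limit.
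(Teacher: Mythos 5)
Your proof is correct and follows essentially the same route as the paper: reduce to a directional sequence via \cite[Lemma 2.3]{Sha90}, use the near-additivity $d(x_0,x_n)\ge d(x_0,x_m)+d(x_m,x_n)-b$ together with the Euclidean law of cosines to force the comparison angle at $x_0$ to zero, and then use the $\CAT(0)$ comparison inequality to show the points at distance $R$ along $[x_0,x_n]$ form a Cauchy sequence whose limits assemble into a geodesic ray. The only (harmless) difference is at the very end, where you verify directly that the pointwise limit is an isometric embedding of $[0,\infty)$, whereas the paper invokes Busemann convexity to show the limit segments are nested.
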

\begin{proof} Let $A$ be a closed and convex subset of $X$. We show that if $A$ is not directionally bounded, then it is not geodesically bounded either. Take $(x_n)$ a directional sequence in $A$ with constant $b$. For $n \in \N$, denote $d_n = d(x_0,x_n)$. Then $\lim_{n \to \infty} d_n = \infty$ and, by \eqref{dir-seq-cond2}, for $m,n \in \N$ with $0 < m < n$ we have $d_n \ge  d_m + d(x_m,x_n) - b$.

Let $m,n \in \N$ with $0 < m < n$ such that $d_m,d_n > 0$. For the geodesic triangle $\Delta(x_0,x_m,x_n)$, consider a comparison triangle $\overline{\Delta}(\overline{x}_0,\overline{x}_m,\overline{x}_n)$ in $\R^2$ and denote its interior angle at $\overline{x}_0$ by $\overline{\alpha}_{m,n}$. Let $b_{m,n}=d_m + d(x_m,x_n) - d_n \in [0,b]$. The cosine law in $\R^2$ yields
\begin{align*}
b^2_{m,n} + d^2_m + d^2_n -2d_mb_{m,n}   - 2d_md_n + 2d_nb_{m,n}  &= (b_{m,n} - d_m + d_n)^2\\
& = d(x_m,x_n)^2 = d^2_m + d^2_n - 2d_md_n\cos \overline{\alpha}_{m,n},
\end{align*}
from where
\[b^2 +  2d_nb \ge b^2_{m,n} +  2d_nb_{m,n}  \ge b^2_{m,n} +  2d_nb_{m,n} - 2d_mb_{m,n} = 2d_md_n(1-\cos \overline{\alpha}_{m,n}),\]
so
\[\sin^2\frac{\overline{\alpha}_{m,n}}{2}\leq \frac{b}{2d_m}\left(\frac{b}{2d_n} + 1\right).\]

For $n,k \ge 1$, if $d_n \ge k$, pick $x_k^n \in [x_0,x_n]$ so that $d(x_0,x_k^n) = k$. Let $k \ge 1$ and $\varepsilon > 0$. Then there exists $n_k \ge 1$ such that for all $n,m \ge n_k$, $d_m,d_n \ge k$ and $\overline{\alpha}_{m,n} < \varepsilon$. If $n,m \ge n_k$, denote $\overline{x}_k^m$ and $\overline{x}_k^n$ the comparison points in $\overline{\Delta}(\overline{x}_0,\overline{x}_m,\overline{x}_n)$ of $x_k^m$ and $x_k^n$, respectively. As $\left(\overline{x}_k^n\right)_{n\ge n_k}$ is Cauchy and $d(x_k^m,x_k^n) \le d(\overline{x}_k^m,\overline{x}_k^n)$ for all $n,m \ge n_k$, it follows that $\left(x_k^n\right)_{n\ge n_k}$ is Cauchy too, hence it converges to some $x_k^* \in A$.

Let $k,l \ge 1$ with $k < l$. For $n$ sufficiently large, $x_k^n = (1-k/l)x_0+(k/l) x_l^n$. By Busemann convexity,
\[d(x_k^n, (1-k/l)x_0+(k/l) x_l^*) \le \frac{k}{l}d(x_l^n,x_l^*),\]
which shows that $x_k^* = (1-k/l)x_0+(k/l) x_l^*$, so $x_k^* \in [x_0,x_l^*]$. Finally, $\bigcup_{k}[x_0,x_k^*] \subseteq A$ is a geodesic ray.
\end{proof}

The following notions will play an important role in the proof of our main result.

\begin{definition}
Let $(X,d)$ be a metric space, $\lambda \ge 1$, $\eps \ge 0$, and $k > 0$. A {\it $(\lambda,\varepsilon)$-quasi-geodesic} is a mapping $\gamma : [a,b] \subseteq \R \to X$ such that
\begin{equation}\label{def-quasi-geod-eq}
\frac{1}{\lambda}|s-t| - \varepsilon \le d(\gamma(s),\gamma(t))\le \lambda |s-t| + \varepsilon,
\end{equation}
for all $s,t\in [a,b]$. We say that $\gamma$ {\it joins} $\gamma(a)$ and $\gamma(b)$. If instead of the interval $[a,b]$ one considers $[0,\infty)$, then $\gamma$ (or its image) is called a {\it $(\lambda,\varepsilon)$-quasi-geodesic ray}. If $\gamma$ is a $(\lambda,\varepsilon)$-quasi-geodesic for every $\varepsilon > 0$, then we simply say that $\gamma$ is a {\it $\lambda$-quasi-geodesic}. At the same time, a $\lambda$-quasi-geodesic defined on $[0,\infty)$ is called a {\it $\lambda$-quasi-geodesic ray}.

A {\it $k$-local $(\lambda,\varepsilon)$-quasi-geodesic} is a mapping $\gamma: [a,b] \subseteq \R\to X$ such that \eqref{def-quasi-geod-eq} holds for all $s,t\in [a,b]$ with $|s-t| \le k$. The ``$k$-local'' versions of the other notions from above are defined in a similar way.
\end{definition}

It is clear that every geodesic is a $\lambda$-quasi-geodesic for any $\lambda \ge 1$. Furthermore, directional curves with constant $b$ are $(1,b)$-quasi-geodesic rays. However, not all quasi-geodesic rays are directional curves (to see this, one can consider the curve given in \cite[p. 142, Exercise 8.23]{Bri99}). 

For our main result, the question whether a local quasi-geodesic ray is actually, up to an appropriate change of constants, a quasi-geodesic ray is particularly relevant. Although a positive answer can be anticipated from \cite[p. 407, Remark]{Bri99}, for completeness we clarify this aspect in Proposition \ref{prop-localray} where we use a proof strategy similar to the one of \cite[p. 405, Theorem 1.13]{Bri99}.

Before stating this result, we recall that given a metric space $(X,d)$, a {\it $(\lambda,\varepsilon)$-quasi-geodesic triangle} $\Delta = \Delta(x_1,x_2,x_3)$ in $X$, where $\lambda \ge 1$ and $\eps \ge 0$, consists of three points $x_1, x_2,x_3 \in X$ (its {\it vertices}) and the images of three $(\lambda,\varepsilon)$-quasi-geodesics (its {\it sides}) joining each pair of points. A $(\lambda,\varepsilon)$-quasi-geodesic triangle is called {\it $M$-slim}, where $M \ge 0$, if each of its sides is contained in the $M$-neighborhood of the union of the other two sides. As before, one can also consider the notion of $\lambda$-quasi-geodesic triangle and these are in fact the triangles that we will work with. The next property follows from \cite[p. 402, Corollary 1.8]{Bri99}.

\begin{remark}\label{rmk-quasi-slim}
If $X$ is a $\delta$-hyperbolic geodesic space, then for every $\lambda \ge 1$ there exists $M = M(\delta,\lambda)$ such that all $\lambda$-quasi-geodesic triangles are $M$-slim. 
\end{remark}

\begin{proposition}\label{prop-localray}
Let $(X,d)$ be a $\delta$-hyperbolic geodesic space, $\lambda \ge 1$, and $M=M(\delta,\lambda)$ be given by Remark \ref{rmk-quasi-slim}. If $\gamma$ is a $k$-local $\lambda$-quasi-geodesic ray such that $k > 8\lambda M$, then $\gamma$ is a $(\lambda^*,\varepsilon)$-quasi-geodesic ray, where  
\[\lambda^* = \left(\frac {1}{\lambda} - \frac{4M}{k/2+\lambda M}\right)^{-1} \quad \text{and} \quad \varepsilon = 2M.\]
\end{proposition}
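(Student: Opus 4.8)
The plan is to establish the two inequalities defining a $(\lambda^*,2M)$-quasi-geodesic ray separately; the upper one is routine, and the lower one is where the work lies. This is a local-to-global stability statement for quasi-geodesics, and I would follow the scheme of \cite[p.~405, Theorem~1.13]{Bri99}. First note that, since $k>8\lambda M>6\lambda M$, we have $k/2+\lambda M>4\lambda M$, so $\tfrac1{\lambda^*}=\tfrac1\lambda-\tfrac{4M}{k/2+\lambda M}>0$ and $\lambda^*\ge\lambda$. For the upper bound, fix $s<t$: if $t-s\le k$ then $d(\gamma(s),\gamma(t))\le\lambda(t-s)$ because $\gamma$ is a $k$-local $\lambda$-quasi-geodesic, while if $t-s>k$ we split $[s,t]$ into subintervals of length at most $k$ and add up along $\gamma$ to get the same bound; since $\lambda\le\lambda^*$, this gives $d(\gamma(s),\gamma(t))\le\lambda^*|s-t|+2M$.

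For the lower bound, fix $a<b$; the goal is $d(\gamma(a),\gamma(b))\ge\tfrac1{\lambda^*}(b-a)-2M$. If $b-a\le k$ this is clear, since then $\gamma|_{[a,b]}$ is a genuine $\lambda$-quasi-geodesic and $\tfrac1\lambda\ge\tfrac1{\lambda^*}$. So assume $b-a>k$, and partition $[a,b]$ into $n=\lceil 2(b-a)/k\rceil\ge 2$ consecutive subintervals $[t_{i-1},t_i]$ of equal length $h=(b-a)/n$; then $k/3<h\le k/2$ (here $b-a>k$ is used). Put $x_i=\gamma(t_i)$. The choice $h\le k/2$ is the crucial one: for each $i\in\{1,\dots,n-1\}$ the restriction $\gamma|_{[t_{i-1},t_{i+1}]}$ is defined on an interval of length $2h\le k$, hence is a genuine $\lambda$-quasi-geodesic joining $x_{i-1}$ and $x_{i+1}$. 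It then follows from Remark~\ref{rmk-quasi-slim}, applied to $\gamma|_{[t_{i-1},t_{i+1}]}$ together with a geodesic $[x_{i-1},x_{i+1}]$ (viewed as a degenerate $\lambda$-quasi-geodesic triangle with a constant side), that $\gamma([t_{i-1},t_{i+1}])$ lies in the $M$-neighborhood of any geodesic $[x_{i-1},x_{i+1}]$; in particular $\dist(x_i,[x_{i-1},x_{i+1}])\le M$. Moreover $d(x_{i-1},x_i)\ge\tfrac1\lambda h>\tfrac{k}{3\lambda}>2M$, since $k>8\lambda M$.

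The heart of the argument is to transfer these local estimates onto a single geodesic $c:=[\gamma(a),\gamma(b)]=[x_0,x_n]$. Writing $\pi$ for a nearest-point projection onto $c$, I would prove, by induction on $i$, that $\dist(x_i,c)\le M$ and that the projections $\pi(x_0)=x_0,\pi(x_1),\dots,\pi(x_n)=x_n$ occur in this order along $c$. The inductive step is a non-backtracking argument: using the fellow-traveling bound $\dist(x_i,[x_{i-1},x_{i+1}])\le M$ and the $\delta$-thinness of the quadrilateral with vertices $x_{i-1},x_i,\pi(x_i),\pi(x_{i-1})$, one shows that $\pi(x_i)$ lies strictly beyond $\pi(x_{i-1})$ on $c$ and that $x_i$ does not stray from $c$; the decisive quantitative input is that each step $d(x_{i-1},x_i)$ exceeds $2M$, which is exactly where $k>8\lambda M$ is needed. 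Granting this, the triangle inequality gives $d(\pi(x_{i-1}),\pi(x_i))\ge d(x_{i-1},x_i)-\dist(x_{i-1},c)-\dist(x_i,c)\ge d(x_{i-1},x_i)-2M$, and the order property lets us telescope:
\[
d(\gamma(a),\gamma(b))=\sum_{i=1}^{n}d(\pi(x_{i-1}),\pi(x_i))\ \ge\ \sum_{i=1}^{n}\bigl(d(x_{i-1},x_i)-2M\bigr)\ \ge\ \tfrac1\lambda(b-a)-2Mn.
\]
A short computation, using $n<\tfrac{2(b-a)}{k}+1$ and $\tfrac{4M}{k}\le\tfrac{4M}{k/2+\lambda M}$ (valid since $k>2\lambda M$), turns this into $d(\gamma(a),\gamma(b))\ge\bigl(\tfrac1\lambda-\tfrac{4M}{k/2+\lambda M}\bigr)(b-a)-2M=\tfrac1{\lambda^*}(b-a)-2M$, as desired. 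No continuity of $\gamma$ is needed, since the whole argument runs through the finitely many sample points $x_i$.

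The step I expect to be the main obstacle is the inductive non-backtracking claim — that the projections $\pi(x_i)$ genuinely march forward along $c$ and that the $x_i$ stay within $M$ of $c$. This is the usual difficulty in stability-of-quasi-geodesics results, and it is precisely what the largeness condition $k>8\lambda M$ is there to handle; everything else reduces to the triangle inequality, Remark~\ref{rmk-quasi-slim}, and elementary estimates on $n$.
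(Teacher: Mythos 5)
Your overall architecture (subdivide $[a,b]$, project the sample points onto $c=[\gamma(a),\gamma(b)]$, show the projections advance monotonically, telescope) is the same as the paper's, and your upper-bound argument and the reduction to the case $b-a>k$ are fine. But there is a genuine gap exactly where you flag ``the main obstacle'': the claim that $\dist(x_i,c)\le M$ and that the $\pi(x_i)$ occur in order along $c$ is asserted rather than proved, and the inductive mechanism you sketch does not close. To pass from the local estimate $\dist(x_i,[x_{i-1},x_{i+1}])\le M$ to a bound on $\dist(x_i,c)$ you must first know that the geodesic $[x_{i-1},x_{i+1}]$ lies close to $c$; the natural slim-polygon estimate gives only
$\dist(x_i,c)\le M+2\delta+\max\{\dist(x_{i-1},c),\dist(x_{i+1},c)\}$,
which is strictly larger than the hypothesis you are trying to propagate (and involves $x_{i+1}$, whose distance to $c$ is not yet controlled at step $i$). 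The errors accumulate and no uniform bound comes out. The paper avoids this with a non-inductive device: it takes the point $x=\gamma(t)$ at \emph{maximal} distance from $c$ and derives a contradiction from two applications of $M$-slimness if that distance exceeds $2M$ (Claim 1 of the paper), and only afterwards runs a separate slim-triangle analysis (Claim 2) to order the projections. Some such global argument is needed; your proof does not contain one.

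The quantitative bookkeeping also depends on the unproved constant. What the maximal-point argument actually delivers is $\dist(\gamma(t),c)\le 2M$, not $M$, so each telescoping step loses $4M$, not $2M$: $d(\pi(x_{i-1}),\pi(x_i))\ge d(x_{i-1},x_i)-4M$. With your subintervals of length $h\le k/2$ the loss per unit length is then at least $8M/k$, which exceeds $4M/(k/2+\lambda M)$, so you would obtain a worse constant than the stated $\lambda^*$. This is why the paper takes subintervals of length exactly $k/2+\lambda M$ (each still of length $<k$, since $k>8\lambda M$): the extra $\lambda M$ per piece is precisely what absorbs the $4M$ loss and yields $\lambda^*=\bigl(\tfrac1\lambda-\tfrac{4M}{k/2+\lambda M}\bigr)^{-1}$ and $\varepsilon=2M$. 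So both the qualitative step (establishing closeness and non-backtracking) and the choice of subdivision length need to be repaired.
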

\begin{proof} 
One can easily see that $\lambda^* \ge 1$. To prove the result, it is enough to show that for every $a,b \in [0,\infty)$, $\left.\gamma\right|_{[a,b]}$ is a $(\lambda^*,\varepsilon)$-quasi-geodesic. Moreover, we can assume that $b-a > k$ (otherwise the conclusion is immediate because $\lambda \le \lambda^*$). Fix a geodesic segment joining $\gamma(a)$ and $\gamma(b)$ and denote it by $[\gamma(a), \gamma(b)]$.
\begin{claim}\label{thm-localray-cl1}
The image of $\left.\gamma\right|_{[a,b]}$ is contained in a $2M$-neighborhood of $[\gamma(a), \gamma(b)]$. 
\end{claim}
\begin{proof}[Proof of Claim \ref{thm-localray-cl1}]
Let $x = \gamma(t)$, where $t \in [a,b]$, such that 
\[\dist(x,[\gamma(a), \gamma(b)])  = \max_{s\in [a,b]} \dist(\gamma(s), [\gamma(a), \gamma(b)]).\]
We know that $\max\{t-a,b-t\} > 4\lambda M$ (since $b-a > k > 8\lambda M)$. Depending on the values of $t-a$ and $t-b$, we distinguish two situations.

Case I : $t-a, b-t > 4 \lambda M$.

Take $t_1 , t_2 \in [a,b]$ such that $t-t_1, t_2 -t \in (4\lambda M, k/2)$. Note that $t \in (t_1,t_2)$. Furthermore, $t_2 -t_1 < k$  and so $\left.\gamma\right|_{[t_1,t_2]}$ is a $\lambda$-quasi geodesic. Denote $y=\gamma(t_1)$, $z=\gamma(t_2)$, $y' \in P_{[\gamma(a), \gamma(b)]}(y)$, and $z' \in P_{[\gamma(a), \gamma(b)]}(z)$. 
Then 
\begin{equation}\label{thm-localray-eq1}
d(x,y) = d(\gamma(t), \gamma(t_1)) \geq \frac{1}{\lambda}(t-t_1) > 4M.
\end{equation}  
Similarly, $d(x,z) > 4M$. Consider two geodesic segments $[y,y']$ and $[y',z]$ joining $y$ and $y'$, and $y'$ and $z$, respectively. Form the $\lambda$-quasi-geodesic triangle $\Delta(y',y,z)$ whose sides are $[y,y']$, $[y',z]$ and the image of the $\lambda$-quasi-geodesic $\left.\gamma\right|_{[t_1,t_2]}$.
Then $\Delta(y',y,z)$ is $M$-slim, so there exists $u\in [y,y'] \cup [y',z]$ such that $d(x,u) \leq  M$.

Suppose $u\in [y',z]$. Take a geodesic segment $[z,z']$ joining $z$ and $z'$ and the geodesic segment $[y',z'] \subseteq [\gamma(a),\gamma(b)]$ joining $y'$ and $z'$. The geodesic triangle $\Delta(y',z,z')$ whose sides are $[y',z]$, $[z,z']$ and $[y',z']$ is $M$-slim, so there exists $w\in [z,z'] \cup [y',z']$ such that
$d(u,w) \leq M$.
 
Thus, there exists $p \in [y,y'] \cup [z,z'] \cup [y',z']$ such that
\begin{equation}\label{thm-localray-eq2}
d(x,p) \leq 2M.
\end{equation} 
Now we show that actually $p\in  [y',z']$. If $p\in  [y, y']$, we have 
\begin{align*}
d(x, y') &\le d(x,p) + d(p,y') = d(x,p) + d(y,y') - d(y,p) \\
& \le d(x,p) + d(y,y') - (d(x,y) - d(x,p))\\
& =  2d(x,p) + d(y,y') - d(x,y) < d(y,y') \quad \text{by }\eqref{thm-localray-eq2}\text{ and } \eqref{thm-localray-eq1}.
\end{align*}
This contradicts the choice of $x$. In a similar way one obtains again a contradiction if $p\in  [z, z']$.

Case II: $t-a > 4 \lambda M$ and $ b-t \le 4 \lambda M$. (The case $b-t > 4 \lambda M$ and $ t-a\leq 4 \lambda M$ is dealt with in a similar way.)
 
Take  $t_1 \in [a,b]$ such that $t-t_1 \in (4 \lambda M, k/2)$. Note that $t \in (t_1,b]$. Furthermore, 
\[b-t_1 = b - t + t - t_1 < 4 \lambda M + \frac{k}{2} < k,\] 
so $\left.\gamma\right|_{[t_1,b]}$ is a $\lambda$-quasi-geodesic. Denote $y=\gamma(t_1)$ and $y' \in P_{[\gamma(a), \gamma(b)]}(y)$. Then
\[d(x,y) = d(\gamma(t),\gamma(t_1)) \ge \frac{1}{\lambda}|t-t_1|> 4M.\]

Consider a geodesic segment $[y,y']$ joining $y$ and $y'$ and the geodesic segment $[y',\gamma(b)] \subseteq [\gamma(a),\gamma(b)]$ joining $y'$ and $\gamma(b)$. Form the $\lambda$-quasi-geodesic triangle $\Delta(y',y,\gamma(b))$ whose sides are $[y,y']$, $[y',\gamma(b)]$ and the image of the $\lambda$-quasi-geodesic $\left.\gamma\right|_{[t_1,b]}$. This triangle is $M$-slim, so there exists $p\in [y,y'] \cup [y',\gamma(b)]$ such that $d(x,p) \le M$. In fact, $p \in [y',\gamma(b)]$ because if $p\in [y, y']$ we get a contradiction as in the previous case.

Hence, in both cases, we find $p\in [\gamma(a),\gamma(b)]$ such that $d(x,p) \le 2M$. This finishes the proof of the claim.
\end{proof}

Now we show that $\left.\gamma\right|_{[a,b]}$ is a $(\lambda^*,\eps)$-quasi-geodesic. Dividing the $k$-local $\lambda$-quasi-geodesic $\left.\gamma\right|_{[a,b]}$ into sufficiently small subpaths and using the triangle inequality we get 
\[d(\gamma(s), \gamma(t))  \le \lambda |s - t| \le \lambda^* |s - t|,\]
for all $s, t \in [a,b]$. Thus, we only need to prove the left-hand inequality in the definition of a $(\lambda^*,\eps)$-quasi-geodesic. Bearing in mind that a subpath of a $k$-local $\lambda$-quasi-geodesic is a $k$-local $\lambda$-quasi-geodesic as well, it is enough to prove that 
\[\frac{1}{\lambda^*}(b-a) - \varepsilon  \le d(\gamma(a),\gamma(b)).\]

Let $n = \lfloor (b-a)/(k/2 + \lambda M)\rfloor$. Note that $n \ge 1$ as $b-a > k > k/2 + \lambda M$. Take $t_0 = a$ and for $i \in \{0,\ldots, n-1\}$, let 
\[t_{i+1} = t_i + \frac{k}{2}+\lambda M \in [a,b].\] 
Then $\left.\gamma\right|_{[t_i ,t_{i+1}]}$ is a $\lambda$-quasi-geodesic for all $i \in \{0,\ldots, n-1\}$. Likewise, $\left.\gamma\right|_{[t_n ,b]}$ is a $\lambda$-quasi-geodesic.

\begin{claim}\label{thm-localray-cl2}
Suppose $n \ge 2$, fix $i \in \{0,\ldots, n-2\}$, denote $x=\gamma(t_i)$, $m=\gamma(t_{i+1})$, $y=\gamma(t_{i+2})$, and take the corresponding projections onto $[\gamma(a),\gamma(b)]$
\[x' \in P_{[\gamma(a),\gamma(b)]}(x), \quad m' \in P_{[\gamma(a),\gamma(b)]}(m), \quad y' \in P_{[\gamma(a),\gamma(b)]}(y).\]
Then $m'\in [x',y'] \subseteq [\gamma(a), \gamma(b)]$. 
\end{claim}
\begin{proof}[Proof of Claim \ref{thm-localray-cl2}]
By Claim \ref{thm-localray-cl1}, $d(x,x') \le 2 M$. Additionally,
\[d(x,m) \ge \frac{1}{\lambda}\left(\frac{k}{2}+\lambda M\right) > 5M.\]
Hence, 
\[5M < d(x,m) \le d(x,x') + d(x',m) \le 2M + d(x',m),\] 
so $d(x',m) > 3M$. Similarly, $d(y',m) > 3M$.\\

Denote $x_0  = \gamma(t_i + \lambda M)$ and $ y_0 = \gamma(t_{i+1} + k/2)$. Take four geodesic segments $[x',x]$, $[x',x_0]$, $[x',m]$ and $[x',y_0]$ that join $x'$ and the respective points $x$, $x_0$, $m$ and $y_0$. Let also $[y',y_0]$ be a geodesic segment joining $y'$ and $y_0$. Consider the $\lambda$-quasi-geodesic triangle $\Delta(x',x_0,y_0)$ of sides $[x',x_0]$, $[x',y_0]$ and the image of $\left.\gamma\right|_{[t_i + \lambda M,t_{i+1} + k/2]}$. Let also $\Delta(x',y_0 ,y')$ be the geodesic triangle of sides $[x',y_0]$, $[y',y_0]$ and $[x',y']$. Since $\Delta(x',x_0,y_0)$ is $M$-slim, there exists $u \in [x',x_0] \cup [x',y_0]$ such that $d(m,u) \le M$. If $u \in [x',y_0]$, then, as $\Delta(x',y_0 ,y')$ is $M$-slim, there exists $w \in [y',y_0] \cup [x',y']$ such that $d(u,w) \le M$. Hence, one can find $p \in [x',x_0] \cup [y',y_0] \cup [x',y']$ so that $d(m,p) \le 2M$. 

Suppose $p \in [x',x_0]$. Consider the $M$-slim $\lambda$-quasi-geodesic triangle $\Delta(x',x_0,x)$ of sides $[x',x_0]$, $[x',x]$ and the image of $\left.\gamma\right|_{[t_i,t_{i} + \lambda M]}$. If $d(p,u) \le M$ for some $u \in [x',x]$, then $d(p,x) \le d(p,u) + d(u,x) \le M + 2M = 3M$. But then
\[5M < d(m,x) \le d(m,p) + d(p,x) \le 2M + 3M = 5M,\]
a contradiction. If $d(p,u) \le M$ for some $u=\gamma(t_i + \alpha)$, where $\alpha \in [0,\lambda M]$, then 
\[d(u,m) = d(\gamma(t_i + \alpha),\gamma(t_i+ k/2 + \lambda M) \ge \frac{1}{\lambda}\left(\frac{k}{2} + \lambda M - \alpha\right) \ge \frac{k}{2\lambda} > 4M.\]
However, $d(u,m) \le d(u,p) + d(p,m) \le M + 2M = 3M$, which is again a contradiction. This shows that $p \notin [x',x_0]$. Similarly, $p \notin [y',y_0]$, so $p \in [x',y']$.

Let $[m,m']$ and $[m,p]$ be two geodesic segments joining $m$ and $m'$, and $m$ and $p$, respectively. The geodesic triangle $\Delta(m,m',p)$ whose sides are $[m,m']$, $[m,p]$ and $[m',p] \subseteq [\gamma(a),\gamma(b)]$ is $M$-slim. Recall that, by Claim \ref{thm-localray-cl1}, $d(m,m') \leq  2M$. Thus, 
$d(u,m) \leq 3M$ for all $u\in [m',p]$. This shows that $x',y' \notin [m',p]$ and, as $p \in [x',y']$, we must have $m' \in [x', y']$.
\end{proof}

Denote now $p_i \in P_{[\gamma(a),\gamma(b)]} \gamma(t_i )$ for $i\in \{0,\cdots, n\}$. Then 
\[\left(\frac{k}{2}+\lambda M\right) \frac{1}{\lambda} \le d(\gamma(t_i ),\gamma(t_{i+1})) \le d(\gamma(t_i ),p_i) + d (p_i, p_{i+1}) + d(p_{i+1},\gamma(t_{i+1})) \le 4M + d(p_i,p_{i+1}),\]
and hence, 
\[d(p_i,p_{i+1}) \ge \frac{k}{2 \lambda}- 3M,\] 
for all $i\in \{0,\cdots, n-1\}$. Likewise, 
\[d(p_n ,\gamma(b)) \ge d(\gamma(t_n),\gamma(b)) - d(p_n,\gamma(t_n)) \ge \frac{1}{\lambda}( b-t_n) -2M.\]
Therefore, using Claim \ref{thm-localray-cl2},
\begin{align*}
d(\gamma(a),\gamma(b)) &= \sum_{i=0}^{n-1}d(p_i,p_{i+1}) + d (p_n,\gamma(b)) \ge n\left(\frac{k}{2 \lambda}- 3M\right) +  \frac{1}{\lambda}( b-t_n) - 2M\\
& = \frac{1}{\lambda}\left(n\frac{k}{2} + b-t_n\right) - 3nM - 2M = \frac{b-a}{\lambda} - 4nM - 2M\\
& \ge \frac{b-a}{\lambda} - 4M\frac{b-a}{k/2 + \lambda M} - 2M = \left(\frac{1}{\lambda} - \frac{4M}{k/2+\lambda M}\right)(b-a) - 2M = \frac{1}{\lambda^*}(b-a) - \eps.
\end{align*} 
 \end{proof}
 
We see next that if the $\delta$-hyperbolic geodesic space is also Busemann convex, then the existence of a quasi-geodesic ray yields the existence of a geodesic ray. Combined with Proposition \ref{prop-localray}, this shows that for the existence of a geodesic ray, the presence a local quasi-geodesic ray is sufficient.
 
\begin{proposition}\label{prop-quasi-ray}
Let $(X,d)$ be a complete Busemann convex space that is additionally $\delta$-hyperbolic and $A \subseteq X$ closed and convex. If $A$ contains a $(\lambda,\eps)$-quasi-geodesic ray, where $\lambda \ge 1$ and $\eps \ge 0$, then it contains a geodesic ray.
\end{proposition}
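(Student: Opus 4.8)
The plan is to build the geodesic ray inside $A$ as a limit of geodesic segments issuing from a fixed base point, with $\delta$-hyperbolicity controlling the behaviour near the base point and Busemann convexity manufacturing the limit in the (possibly non-proper) space. Concretely, let $\gamma:[0,\infty)\to A$ be the $(\lambda,\eps)$-quasi-geodesic ray, put $o:=\gamma(0)$, and for each $n$ let $\sigma_n:[0,\ell_n]\to X$ be the unique geodesic (recall $X$ is uniquely geodesic) from $o$ to $\gamma(n)$, parametrized by arc length, where $\ell_n=d(o,\gamma(n))$. Since $A$ is convex and contains $o$ and $\gamma(n)$, we have $\sigma_n([0,\ell_n])\subseteq A$, and from $\ell_n\ge\tfrac1\lambda n-\eps$ we get $\ell_n\to\infty$. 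Once I show that, for every fixed $t\ge 0$, the sequence $(\sigma_n(t))_n$ is Cauchy, its limit $c(t)$ lies in $A$ (closed, complete), and passing the identity $d(\sigma_n(t),\sigma_n(t'))=|t-t'|$ (valid whenever $\ell_n\ge\max\{t,t'\}$) to the limit gives $d(c(t),c(t'))=|t-t'|$ and $d(o,c(t))=t$; hence $c:[0,\infty)\to A$ is a genuine (unbounded) geodesic ray.

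So everything reduces to the Cauchy property, and the first ingredient is a uniform fellow-traveling estimate for the $\sigma_n$ near $o$ coming from $\delta$-hyperbolicity. The point is that the tail $\gamma|_{[m,\infty)}$ stays uniformly far from $o$, since $d(o,\gamma(t))\ge\tfrac1\lambda t-\eps\ge\tfrac1\lambda m-\eps$ for $t\ge m$. By the stability of quasi-geodesics in a $\delta$-hyperbolic geodesic space (the Morse lemma, applied to the $(\lambda,\eps)$-quasi-geodesic $\gamma|_{[m,n]}$, see \cite{Bri99}), there is $R=R(\delta,\lambda,\eps)$, independent of $m<n$, such that any geodesic segment $[\gamma(m),\gamma(n)]$ lies in the $R$-neighbourhood of $\gamma([m,n])$, whence $\dist\!\big(o,[\gamma(m),\gamma(n)]\big)\ge\tfrac1\lambda m-\eps-R$. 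Now consider the geodesic triangle with vertices $o,\gamma(m),\gamma(n)$ and sides $\sigma_m$, $\sigma_n$, $[\gamma(m),\gamma(n)]$: by $\delta$-slimness every point $\sigma_m(s)$ is within $\delta$ of one of the other two sides, and if $\tfrac1\lambda m-\eps-R-\delta>s$ it cannot be within $\delta$ of $[\gamma(m),\gamma(n)]$ (as $d(o,\sigma_m(s))=s$), so it is $\delta$-close to some $\sigma_n(s')$; since $d(o,\sigma_m(s))=s$ and $d(o,\sigma_n(s'))=s'$, the triangle inequality forces $|s-s'|\le\delta$ and hence $d(\sigma_m(s),\sigma_n(s))\le 2\delta$. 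By symmetry in $m$ and $n$ this yields: for every $s\ge 0$ there is $N(s)$ such that $d(\sigma_m(s),\sigma_n(s))\le 2\delta$ whenever $m,n\ge N(s)$. (Equivalently, one may check $(\gamma(m)|\gamma(n))_o\to\infty$ and quote Proposition \ref{prop-gromov-charact}.)

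The last step turns this uniform $2\delta$-fellow-traveling \emph{at large parameters} into convergence \emph{at every fixed parameter}, and this is where Busemann convexity enters: since $\sigma_m$ and $\sigma_n$ both start at $o$, the Busemann inequality applied to the subdivision $t=(1-\tfrac ts)\cdot 0+\tfrac ts\cdot s$ gives
\[d(\sigma_m(t),\sigma_n(t))\le\tfrac ts\,d(\sigma_m(s),\sigma_n(s))\le\tfrac{2\delta t}{s}\]
for all $m,n\ge N(s)$ and all $t\le s$; letting $s\to\infty$ shows $(\sigma_n(t))_n$ is Cauchy, completing the argument. The main obstacle is exactly this limiting mechanism: without local compactness one cannot extract the ray by Arzelà–Ascoli, so the proof must exploit metric convexity to create the limit — the same strategy as in Proposition \ref{thm-geod-dir-bd}, except that here $\delta$-hyperbolicity (rather than the $\CAT(0)$ comparison angles used there) supplies the constant-size control near $o$ that feeds into the convexity estimate; a secondary technical point is ensuring the Morse-type stability is invoked for $(\lambda,\eps)$-quasi-geodesics (with $\eps>0$) rather than only $\lambda$-quasi-geodesics, which is standard.
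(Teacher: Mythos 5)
Your proof is correct, and it reaches the conclusion by a genuinely different route than the paper. The paper samples the quasi-geodesic ray along a geometric sequence $x_n=\gamma(\alpha^n)$ with $\alpha>1$ chosen so close to $1$ that the elementary estimate $2(x_n|x_{n+1})_{x_0}\ge \frac1\lambda(\alpha^n+\alpha^{n+1})-\lambda(\alpha^{n+1}-\alpha^n)-3\eps=\beta\alpha^n-3\eps$ already forces the \emph{consecutive} Gromov products to grow geometrically; it then applies Proposition \ref{prop-gromov-charact} to get a uniform bound $\delta^*$ at radius $(x_n|x_{n+1})_{x_0}$ and Busemann convexity to scale this down to $d(x_k^n,x_k^{n+1})\le 4k\delta^*/(\beta\alpha^n)$, a summable bound that gives the Cauchy property. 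You instead use all of $\gamma(n)$ and invoke the Morse lemma to push $o$ away from $[\gamma(m),\gamma(n)]$, then slimness to get $2\delta$-fellow-traveling of $\sigma_m,\sigma_n$ up to a radius growing with $\min(m,n)$, and finally the same Busemann scaling with the roles reversed: the closeness constant $2\delta$ stays fixed while the radius $s$ at which it holds tends to infinity, so $d(\sigma_m(t),\sigma_n(t))\le 2\delta t/s\to 0$. Both arguments use convexity identically to manufacture the limit without local compactness; what the paper's $\alpha^n$ trick buys is independence from the quantitative stability theorem for $(\lambda,\eps)$-quasi-geodesics with $\eps>0$ (note that Remark \ref{rmk-quasi-slim} as stated only covers $\lambda$-quasi-geodesic triangles, so you are right to cite the full Morse lemma from \cite{Bri99} separately), while your version is the more standard ``boundary point reached by a genuine ray'' argument. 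One caution on your parenthetical remark: $(\gamma(m)|\gamma(n))_o\to\infty$ does \emph{not} follow from the quasi-geodesic inequalities alone when $\lambda>1$ (fix $m$ and let $n\to\infty$ to see the naive lower bound go to $-\infty$); it genuinely needs the Morse-lemma detour you take, and dodging exactly this failure is the reason the paper passes to the sequence $\alpha^n$.
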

\begin{proof}
Let $\gamma : [0,\infty) \to A$ be a $(\lambda,\eps)$-quasi-geodesic ray. Then 
\[\frac{1}{\lambda}|s-t| - \varepsilon \le d(\gamma(s),\gamma(t))\le \lambda |s-t| + \varepsilon,\]
for all $s,t\in [0,\infty)$.

Take $\alpha > 1$ such that 
\[\beta = \frac{1}{\lambda} + \lambda + \alpha\left(\frac{1}{\lambda} - \lambda\right) > 0.\]
Let $x_0 = \gamma(0)$ and for $n \ge 1$, define $x_n =\gamma(\alpha^n)$. Then 
\[2(x_n | x_{n+1})_{x_0}  \ge   \frac{1}{\lambda}(\alpha^n + \alpha^{n+1}) - 2\eps - \lambda(\alpha^{n+1}-\alpha^n) - \eps = \beta\alpha^n - 3\eps.\]
Thus, for $n$ large enough, $(x_n | x_{n+1})_{x_0} \ge \beta \alpha^n/4$. Observe also that $\lim_{n \to \infty}d(x_0,x_n) = \infty$.

For $n,k \ge 1$, if $d(x_0,x_n) \ge k$, pick $x_k^n \in [x_0,x_n]$ so that $d(x_0,x_k^n) = k$. Fix $k \ge 1$. For $n$ sufficiently large, $d(x_0,x_n), d(x_0,x_{n+1}) \ge k$ and $(x_n | x_{n+1})_{x_0} \ge k$. Moreover, as $(x_n | x_{n+1})_{x_0} \le \min\{d(x_0,x_n), d(x_0,x_{n+1})\}$, we can take $y_n \in [x_0,x_n]$ and $y_{n+1} \in [x_0,x_{n+1}]$ such that $d(x_0,y_n) = d(x_0,y_{n+1}) = (x_n | x_{n+1})_{x_0}$. By Busemann convexity, 
\[d(x_k^n, x_k^{n+1}) \le \frac{k}{(x_n | x_{n+1})_{x_0}}d(y_n,y_{n+1}).\]
Applying Proposition \ref{prop-gromov-charact}, we conclude that there exists $\delta^* >0$ such that
\[d(x_k^n, x_k^{n+1})  \le \frac{k\delta^*}{(x_n | x_{n+1})_{x_0}} \le \frac{4k\delta^*}{\beta \alpha^n},\]
for all $n$ sufficiently large. So the sequence $(x_k^n)_n$ is Cauchy and hence converges to a point $x_k^*\in A$ which satisfies $d(x_0, x_k^*) =k$. 

Let $k,l \ge 1$ with $k < l$. As in the proof of Proposition \ref{thm-geod-dir-bd} one can use Busemann convexity to show that $x_k^* \in [x_0,x_l^*]$, hence $\bigcup_{k}[x_0,x_k^*]$ is a geodesic ray in $A$.
\end{proof} 

\begin{corollary}\label{cor-local-quasi-ray}
Let $(X,d)$ be a complete Busemann convex space that is additionally $\delta$-hyperbolic and $A \subseteq X$ closed and convex. Suppose $\lambda \ge 1$ and $M=M(\delta,\lambda)$ is given by Remark \ref{rmk-quasi-slim}. If $A$ contains a $k$-local $\lambda$-quasi-geodesic ray such that $k > 8\lambda M$, then it contains a geodesic ray.
\end{corollary}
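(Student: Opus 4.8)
The plan is to obtain the corollary by simply chaining together the two preceding propositions. First I would note that a complete Busemann convex space is in particular a geodesic space (indeed uniquely geodesic, as recalled in Section \ref{Preliminaries}), and by hypothesis it is $\delta$-hyperbolic; hence Proposition \ref{prop-localray} applies to $X$ with the given $\lambda \ge 1$ and the associated constant $M = M(\delta,\lambda)$ furnished by Remark \ref{rmk-quasi-slim}. It is precisely this $M$ that appears in the statement of the corollary, so the hypothesis $k > 8\lambda M$ is exactly the hypothesis needed to invoke Proposition \ref{prop-localray}.

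Next, let $\gamma : [0,\infty) \to A$ be a $k$-local $\lambda$-quasi-geodesic ray with $k > 8\lambda M$. Regarding $\gamma$ as a curve in the ambient space $X$, Proposition \ref{prop-localray} yields that $\gamma$ is in fact a $(\lambda^*,\varepsilon)$-quasi-geodesic ray, where
\[\lambda^* = \left(\frac{1}{\lambda} - \frac{4M}{k/2 + \lambda M}\right)^{-1} \ge 1 \quad \text{and} \quad \varepsilon = 2M \ge 0.\]
The key observation is that passing from a $k$-local $\lambda$-quasi-geodesic ray to a $(\lambda^*,\varepsilon)$-quasi-geodesic ray does not alter the underlying map (only the constants in the defining inequalities change), so this $(\lambda^*,\varepsilon)$-quasi-geodesic ray still has image contained in $A$.

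Finally, since $A$ is closed and convex and now is known to contain a $(\lambda^*,\varepsilon)$-quasi-geodesic ray with $\lambda^* \ge 1$ and $\varepsilon \ge 0$, Proposition \ref{prop-quasi-ray} applies directly and gives that $A$ contains a geodesic ray, which completes the proof.

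There is essentially no obstacle here beyond bookkeeping: all the mathematical content has already been established in Propositions \ref{prop-localray} and \ref{prop-quasi-ray}, and the corollary is just their composition. The only points requiring a moment's care are that the constant $M$ in the statement is the same one governing slimness of $\lambda$-quasi-geodesic triangles (so that the two propositions can be concatenated), and that the threshold $k > 8\lambda M$ both makes $\lambda^*$ well defined with $\lambda^* \ge 1$ and is exactly the hypothesis of Proposition \ref{prop-localray}.
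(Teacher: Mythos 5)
Your proposal is correct and is exactly the argument the paper intends: the corollary is stated without proof precisely because it is the immediate concatenation of Proposition \ref{prop-localray} (promoting the $k$-local $\lambda$-quasi-geodesic ray to a $(\lambda^*,2M)$-quasi-geodesic ray, with the same image in $A$) and Proposition \ref{prop-quasi-ray}. Your bookkeeping remarks — that Busemann convex spaces are geodesic so Proposition \ref{prop-localray} applies, and that the $M$ in the hypothesis is the slimness constant from Remark \ref{rmk-quasi-slim} — are the right points to check.
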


Since any directional curve is a quasi-geodesic ray, we can apply Proposition \ref{prop-quasi-ray} to get the following analogue of Proposition \ref{thm-geod-dir-bd}.
\begin{proposition}\label{thm-geod-dir-bd-Busemann}
In a complete Busemann convex space that is additionally $\delta$-hyperbolic, every closed and convex set that is geodesically  bounded is directionally bounded.
\end{proposition}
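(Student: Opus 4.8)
The plan is to argue by contraposition, showing that if $A$ is not directionally bounded then it is not geodesically bounded. So suppose $A$ contains a directional curve: by definition there are a curve $\gamma:[0,\infty)\to A$ and a constant $b\ge 0$ with
$|s-t|-b\le d(\gamma(s),\gamma(t))\le |s-t|$ for all $s,t\ge 0$ (in particular $\gamma$ is $1$-Lipschitz, hence a genuine curve). The key observation, already recorded after the definition of directional curves, is that such a $\gamma$ is a $(1,b)$-quasi-geodesic ray: from the above we get $|s-t|-b\le d(\gamma(s),\gamma(t))\le |s-t|\le |s-t|+b$, which is exactly the defining pair of inequalities of a $(\lambda,\eps)$-quasi-geodesic with $\lambda=1$ and $\eps=b$. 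Hence $A$ contains a $(1,b)$-quasi-geodesic ray.

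Now I would invoke Proposition \ref{prop-quasi-ray}: since $X$ is complete, Busemann convex and $\delta$-hyperbolic, and $A$ is closed and convex and contains a $(\lambda,\eps)$-quasi-geodesic ray (with $\lambda=1$, $\eps=b$), it follows that $A$ contains a geodesic ray. Therefore $A$ is not geodesically bounded, which establishes the contrapositive and hence the proposition.

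I do not expect any remaining obstacle once Proposition \ref{prop-quasi-ray} is available; all the real work is absorbed into that result. If instead one wanted a self-contained argument, the hard part would be precisely the content of Proposition \ref{prop-quasi-ray}, namely upgrading a merely coarsely geodesic ray to an honest geodesic ray lying inside the closed convex set $A$. This combines the $\delta$-hyperbolic stability of geodesics (through the Gromov-product characterization, Proposition \ref{prop-gromov-charact}) with a Busemann-convexity limiting argument: one truncates the segments $[x_0,x_n]$ at distance $k$, shows via the Gromov-product bounds that these truncations $x_k^n$ form Cauchy sequences, and then uses Busemann convexity to check that the limits $x_k^*$ are nested along geodesics and so assemble into a geodesic ray in $A$. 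An alternative route would pass through Shafrir's directional-sequence reformulation quoted after the definition (replacing the directional curve by a directional sequence), but since directional curves already yield quasi-geodesic rays outright, invoking Proposition \ref{prop-quasi-ray} directly is the most economical option.
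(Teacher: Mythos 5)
Your proposal is correct and coincides with the paper's own argument: the paper derives this proposition in one line by observing that a directional curve with constant $b$ is a $(1,b)$-quasi-geodesic ray and then applying Proposition \ref{prop-quasi-ray}. Nothing further is needed.
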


\begin{remark}
Let us notice that the previous result is in general not true if the space is merely assumed to be Busemann convex. Actually, it is well-known that every separable Banach space has an equivalent strictly convex norm (see, e.g., \cite[p. 60, Theorem 1.5]{ADL97}). In particular, one can renorm $\ell_1$ to make it Busemann convex. Since this space is not reflexive, according to \cite[Proposition 3.5]{Sha90}, geodesic boundedness cannot imply directional boundedness for every closed and convex set. 
\end{remark}

In contrast to $\delta$-hyperbolic Busemann convex spaces, in Hilbert spaces, the existence of a quasi-geodesic ray in a closed and convex set does not yield the existence of a geodesic ray as the following example shows. (Recall however that, by \cite[Proposition 3.5]{Sha90}, in any Hilbert space, the existence of a directional curve implies the existence of a geodesic ray.)

\begin{example} Let $A \subseteq \ell_2$ be given by
\[A = \left\{x = (x_1,x_2,x_3, \ldots ) \in \ell_2: 0 \le x_n \le 10^n \text{ for all } n \ge 1\right\}.\]
Then $A$ is closed, convex and linearly bounded, hence geodesically bounded. We construct next a quasi-geodesic ray in $A$. To this end, consider first the sequence $(x^k)$ in $A$, where $x^0 = (0,0,0, \ldots)$ and  
$$
x^k_n=\left\{
\begin{array}{ll}
10^n & \mbox{if } n \le k \\
0 & \mbox{otherwise},
\end{array}\right.
$$
for all $k \ge 1$.
Let $a_0 = 0$ and $a_k = \sum_{n =1}^k 10^n$ for $k \ge 1$. Note that $a_{k+1} - a_k = 10^{k+1} = \left\|x^{k+1}-x^k\right\|_2$ for all $k \in \N$. Define now $\gamma : [0,\infty) \to A$,
\[\gamma(t) = \left(1- \frac{t-a_k}{10^{k+1}}\right)x^k + \frac{t-a_k}{10^{k+1}}x^{k+1}, \quad \text{for } a_k \le t < a_{k+1}, \text{ where } k \in \N.\]
Note that $\gamma(a_k) = x^k$ for all $k \in \N$. We show that $\gamma$ is a $\sqrt{11/3}$-quasi-geodesic ray. 

Case I: Let $k \ge 1$ and $a_{k-1} \le s \le t < a_k$. Then $\left\|\gamma(t) - \gamma(s)\right\|_2 = t-s$.

Case II: Let $k \ge 1$, $a_{k-1} \le s < a_k$, and $a_{k} \le t < a_{k+1}$. Denote
\[u = \left\|x^k - \gamma(s)\right\|_2 = a_k - s \quad \text{and} \quad v = \left\|\gamma(t) - x^k\right\|_2 = t - a_k.\]

Then
\[t-s = u + v \le  \sqrt{2}\sqrt{u^2+v^2} = \sqrt{2} \left\|\gamma(t) - \gamma(s)\right\|_2\]
and
\[\left\|\gamma(t) - \gamma(s)\right\|_2 \le \left\|\gamma(t) - x^k\right\|_2 + \left\|x^k - \gamma(s)\right\|_2 = v+u= t-s.\]

Case III: Let $1 \le k < l$, $a_{k-1} \le s < a_k$, and $a_{l} \le t < a_{l+1}$. Denote
\[u = \left\|x^k - \gamma(s)\right\|_2 = a_k - s, \qquad v = \left\|\gamma(t) - x^l\right\|_2 = t - a_l,\]
and
\begin{align*}
w & =  \left\|x^l - x^k\right\|_2 = \sqrt{\sum_{i=k}^{l-1}10^{2(i+1)}} = \frac{1}{3\sqrt{11}}\sqrt{10^{2(l+1)} - 10^{2(k+1)}}\\
& \ge \frac{1}{3\sqrt{11}}\left(10^{l+1} - 10^{k+1}\right) = \frac{3}{\sqrt{11}}\sum_{i=k}^{l-1} 10^{i+1}= \frac{3}{\sqrt{11}}\sum_{i=k}^{l-1} \left\|x^{i+1} - x^i\right\|_2.
\end{align*}

Then
\begin{align*}
t-s & = a_k - s  + \sum_{i = k}^{l-1}(a_{i+1}-a_i) +  t - a_l = u + v + \sum_{i=k}^{l-1} \left\|x^{i+1} - x^i\right\|_2\\
& \le u + v + \frac{\sqrt{11}}{3}w \le \frac{\sqrt{11}}{3} (u+v+w) \le \sqrt{\frac{11}{3}}\sqrt{u^2+v^2+w^2} = \sqrt{\frac{11}{3} }\left\|\gamma(t) - \gamma(s)\right\|_2,
\end{align*}
where the last inequality follows from $3(u^2+v^2+w^2) \ge (u+v+w)^2$.

At the same time,
\begin{align*}
\left\|\gamma(t) - \gamma(s)\right\|_2 & \le \left\|\gamma(t) - x^l\right\|_2 + \left\|x^k - \gamma(s)\right\|_2 + \sum_{i=k}^{l-1} \left\|x^{i+1} - x^i\right\|_2 \\
& = t - a_l + a_k - s + \sum_{i=k}^{l-1} (a_{i+1} - a_i) = t-s.
\end{align*}

\end{example}

\section{Geodesic rays and the Lion-Man game}\label{lion-game}

Let $(X,d)$ be a uniquely geodesic space and $A \subseteq X$ nonempty and convex. Take $D > 0$ and suppose that $L_0, M_0 \in A$ are the starting points of the lion and the man, respectively. At step $n+1$, $n \in \N$, the lion moves from the point $L_n$ to the point $L_{n+1} \in [L_n,M_n]$ such that $d(L_n,L_{n+1}) = \min\{D,d(L_n,M_n)\}$. The man moves from the point $M_n$ to the point $M_{n+1} \in A$ satisfying $d(M_n,M_{n+1}) \le D$. We say that the lion wins if the sequence $(d(L_{n+1},M_n))$ converges to $0$. Otherwise the man wins. Denote in the sequel $D_n = d(L_n,M_n)$, $n \in \N$.

It is easy to see that the lion wins if and only if either of the following two mutually exclusive situations holds:
\begin{itemize}
\item[(1)] there exists $n_0 \in \N$ such that $D_{n_0} \le D$. In this case, $L_{n+1} = M_n$ for all $n \ge n_0$;
\item[(2)] $D_n > D$ for all $n \in \N$ and $\lim_{n \to \infty}D_n = D$. Note that the last limit exists because in this case the sequence $(D_n)$ is nonincreasing as
\[D_{n+1} \le d(L_{n+1},M_n) + d(M_n,M_{n+1}) = D_n - D + d(M_n,M_{n+1}) \le D_n,\]
for all $n \in \N$.
\end{itemize}
Consequently, the man wins if and only if $D_n > D$ for all $n \in \N$ and $\lim_{n \to \infty}D_n > D$.

\begin{theorem}\label{thm-lion-dir-bd}
Let $(X,d)$ be a uniquely geodesic space and $A \subseteq X$ a nonempty and convex set where the Lion-Man game is played following the rules described above. If the lion always wins, then $A$ is directionally bounded.
\end{theorem}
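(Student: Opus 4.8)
The plan is to argue by contraposition: assuming $A$ is \emph{not} directionally bounded, I will exhibit a choice of the parameter $D$, starting points $L_0,M_0\in A$, and a strategy for the man with which the man wins, contradicting the hypothesis that the lion always wins. Since $A$ is not directionally bounded it contains, by definition, a directional curve $\gamma:[0,\infty)\to A$, so there is some $b\ge 0$ with $|s-t|-b\le d(\gamma(s),\gamma(t))\le |s-t|$ for all $s,t\ge 0$.

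For the game configuration, fix any $D>0$, choose $T>D+b$, and set $L_0=\gamma(0)$ and $M_0=\gamma(T)$. The man's strategy is simply to march along $\gamma$ while ignoring the lion: at step $n$ he is at $M_n=\gamma(T+nD)$. This is a legal move because the upper estimate in the definition of a directional curve gives $d(M_n,M_{n+1})=d(\gamma(T+nD),\gamma(T+(n+1)D))\le D$, and $M_{n+1}\in A$ since $\gamma$ takes values in $A$.

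The heart of the proof is an induction showing $D_n=d(L_n,M_n)\ge T-b$ for every $n$. For $n=0$ this is the lower estimate $d(\gamma(0),\gamma(T))\ge T-b$. For the inductive step, if $D_j\ge T-b>D$ for all $j\le n$, then by the rules the lion moves exactly distance $D$ at each of the first $n+1$ steps, so $d(\gamma(0),L_{n+1})=d(L_0,L_{n+1})\le (n+1)D$ by the triangle inequality; combining this with the lower estimate $d(\gamma(0),M_{n+1})=d(\gamma(0),\gamma(T+(n+1)D))\ge T+(n+1)D-b$ yields
\[
D_{n+1}=d(L_{n+1},M_{n+1})\ge d(\gamma(0),M_{n+1})-d(\gamma(0),L_{n+1})\ge\big(T+(n+1)D-b\big)-(n+1)D=T-b.
\]
In particular $D_n>D$ for all $n$, so, as noted just before the statement, the sequence $(D_n)$ is nonincreasing and hence converges to a limit $\ge T-b>D$; by the characterization of the man winning given before the statement, the man wins, which is the desired contradiction.

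The argument is short, and the only point requiring care is the logical order inside the induction: the bound $d(L_0,L_{n+1})\le(n+1)D$ relies on each of the first $n+1$ lion steps having length exactly $D$, which in turn follows from $D_j>D$ for $j\le n$; hence one must carry ``$D_n\ge T-b$'' as the inductive hypothesis rather than trying to establish it in one shot. No use is made of convexity or unique geodesicity beyond what is needed for the game itself to be well defined, nor of the sequence formulation of directionality — the man never consults the lion's position, and the directional lower bound alone forbids the lion, constrained to speed $D$, from ever closing the gap below $T-b$.
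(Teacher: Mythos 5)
Your proof is correct and follows essentially the same contrapositive argument as the paper: send the man marching along the directional curve, bound $d(L_0,L_{n})\le nD$, and use the directional lower bound to keep $D_n\ge T-b>D$, so the man wins. The only (harmless) difference is that your induction is not actually needed: the rule $d(L_n,L_{n+1})=\min\{D,D_n\}\le D$ gives $d(L_0,L_{n+1})\le (n+1)D$ unconditionally, which is how the paper argues directly.
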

\begin{proof}
Assume that there exists a directional curve $\gamma : [0, \infty) \to A$, i.e., there exists $b>0$ such that 
\[|s-t|-b\le d(\gamma(s),\gamma(t))\le |s-t|,\]
for all $s,t\geq 0$. Take $D=b$, $ L_0 =\gamma(0)$ and $M_n = \gamma((n+2)D + 1)$ for all $n \ge 0$. Then, for every $n \ge 0$,
\[d(M_n,M_{n+1}) \le (n+3)D + 1 - (n+2)D - 1 = D,\]
\[d(L_0,L_n) \le \sum_{i=0}^{n-1}d(L_i,L_{i+1}) \le nD\]
and
\[d(M_n,L_n) \ge d(M_n,L_0) - d(L_0,L_n) \ge (n+2)D + 1 - D - nD \ge D + 1,\]
hence the man wins.
\end{proof}

The subsequent result shows that if the Lion-Man game is played in a $\CAT(0)$ space, then the success of the man yields the existence of a local quasi-geodesic ray.

\setcounter{claim}{0}

\begin{proposition}\label{prop-man-ray}
Let $(X,d)$ be a $\CAT(0)$ space and $A \subseteq X$ a nonempty and convex set where the Lion-Man game is played. If the man wins, then for every $k > 0$ there exists a $k$-local $\sqrt{2}$-quasi-geodesic ray in $A$.
\end{proposition}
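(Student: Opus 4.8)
The plan is to run the Lion-Man game from a carefully chosen starting configuration and extract a $k$-local $\sqrt 2$-quasi-geodesic ray from the man's trajectory. Suppose the man wins; then $D_n > D$ for all $n$ and $\ell := \lim_{n\to\infty} D_n > D$. Fix $k > 0$. The first step is to choose the step length $D$ small enough (equivalently, rescale so that $D < k$, or better $D \le k$) and, if necessary, replay the game from a late stage $n_0$ so that $D_n$ is close to $\ell$ for all $n \ge n_0$; relabel so that this holds from $n=0$. The candidate ray will be (a constant-speed reparametrization of) the piecewise-geodesic path obtained by concatenating the geodesic segments $[M_n, M_{n+1}]$, or perhaps more conveniently the segments traced by the lion, $[L_n, L_{n+1}]$ — whichever gives the cleaner estimate; I expect the man's path is the right one since the lion's path can stall, while $d(M_n, M_{n+1})$ can be controlled from below using the relation $D_{n+1} \le D_n - D + d(M_n,M_{n+1})$ together with $D_{n+1} \ge \ell$.

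The key geometric input is the $\CAT(0)$ structure. The rule $L_{n+1} \in [L_n, M_n]$ with $d(L_n, L_{n+1}) = D$ means the lion always moves toward the man's current position, and because $D_n \to \ell > D$, the triangle $\Delta(L_n, L_{n+1}, M_n)$ is long and thin: the angle at $M_n$ between $[M_n, L_n]$ and $[M_n, L_{n+1}]$ is small, and in fact $d(L_{n+1}, M_n) = D_n - D$ is known exactly, while $d(L_n, M_n) = D_n$. The crucial point is to control the angle $\angle_{M_n}(M_{n-1}, M_{n+1})$, or rather to show that consecutive displacement directions of the man do not reverse. Here I would use that the man, to avoid being caught, must keep $D_{n+1} \ge \ell$; comparing with the Euclidean comparison triangle for $\Delta(L_{n+1}, M_n, M_{n+1})$ and using that $d(L_{n+1}, M_n) = D_n - D$ is almost $\ell - D$ while $d(L_{n+1}, M_{n+1}) \ge D_{n+1} - d(L_{n+1}, L_{n+2})\cdot 0 \ge \ell - D$ (since actually $d(L_{n+2}, M_{n+1}) = D_{n+1} - D$ and $d(L_{n+1}, M_{n+1}) \ge D_{n+1}$), one forces $d(M_n, M_{n+1})$ to be bounded below, say by some $c > 0$ depending on $\ell - D$, and forces the path to make definite progress. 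Summing these displacements over a window of total parameter-length $\le k$ and using the $\CAT(0)$ (hence Busemann) convexity to compare with the Euclidean model, one gets the lower bound $d(\gamma(s),\gamma(t)) \ge \tfrac{1}{\sqrt 2}|s-t|$ on that scale; the upper bound $d(\gamma(s),\gamma(t)) \le |s-t| \le \sqrt 2|s-t|$ is immediate since $\gamma$ is parametrized by arclength along a rectifiable path.

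The main obstacle is the local quasi-geodesic lower estimate: showing that over any parameter window of length at most $k$, the man's path cannot double back enough to violate $\tfrac{1}{\sqrt2}|s-t| \le d(\gamma(s),\gamma(t))$. The mechanism should be that each move $M_n \to M_{n+1}$ has a component "away from $L_{n+1}$" that is bounded below, because otherwise $D_{n+1}$ would drop below $\ell$; this is exactly where one needs $D < \ell$ strictly and where the constant $\sqrt 2$ (rather than $1$) comes from — the man is allowed some lateral motion. Quantitatively, I would fix a target window, choose $D$ small relative to both $k$ and $\ell - D$ (shrinking $D$ only makes the man's per-step progress relatively larger), decompose each displacement in the comparison triangle into a radial part (toward/away from the pursuer) and a tangential part, show the radial parts sum to at least half the total arclength of the window, and then $\CAT(0)$ comparison upgrades this Euclidean estimate to the metric inequality in $X$. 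The factor $\sqrt 2$ is then what the Pythagorean bound $(\text{radial})^2 + (\text{tangential})^2 \le d^2$ with radial $\ge \tfrac12 \cdot \text{arclength}$ yields, matching the inequality $3(u^2+v^2+w^2)\ge(u+v+w)^2$–type bookkeeping seen in the preceding example.
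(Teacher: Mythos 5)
Your overall strategy---extract a piecewise-geodesic ray from the game trajectory and use $\lim_n D_n = D+\alpha > D$ together with $\CAT(0)$ comparison to show the path cannot turn back on itself---is the right one, and the upper bound $d(\gamma(s),\gamma(t))\le|s-t|$ is immediate as you say. But the entire content of the proposition is the local lower bound, and that part of your argument is a plan rather than a proof, with two concrete defects. First, you repeatedly treat the step size $D$ as a parameter you may shrink (``choose the step length $D$ small enough'', ``choose $D$ small relative to both $k$ and $\ell-D$''). You may not: $D$ is part of the given game instance in which the man wins, and winning for one $D$ does not imply winning for a smaller one, since the lion's step changes as well. Any proof must work with the given $D$ and control windows containing up to $\lceil k/D\rceil$ consecutive segments. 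Second, your key quantitative claim---that the ``radial'' components of the displacements sum to at least half the arclength over any window of parameter-length $k$---is asserted, not derived; as stated it would in any case produce the constant $2$ rather than $\sqrt2$. What is actually needed, and what you never establish, is (a) an asymptotic alignment statement (the turning angle at each vertex tends to $\pi$), and (b) a calibration of the admissible per-step angular deficit to the number $\lceil k/D\rceil$ of steps in a window, because these deficits accumulate additively along the window; without (b) the estimate cannot be made uniform over all windows of length $k$.

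The paper supplies exactly these ingredients, and does so using the lion's path rather than the man's (your worry that the lion's path ``can stall'' is unfounded: since $D_n>D$ for all $n$, every lion step has length exactly $D$, which is what makes the arclength parametrization clean; the man's steps only satisfy $d(M_n,M_{n+1})\ge D_{n+1}-D_n+D$ and need not have constant length). The alignment statement is that $\beta_n=\angle_{L_n}(L_{n-1},L_{n+1})\to\pi$: the comparison triangle of $\Delta(L_n,M_{n-1},M_n)$ has side lengths tending to $\alpha$, $D+\alpha$ and $D$, hence degenerates, so $\angle_{L_n}(M_{n-1},M_n)\to 0$, while $\angle_{L_n}(L_{n-1},M_{n-1})=\pi$ because $L_n$ is interior to $[L_{n-1},M_{n-1}]$. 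One then starts the ray at an index $n_k$ beyond which $\beta_{n+1}\ge\pi-\pi/(4\lceil k/D\rceil)$, so that over any $\lceil k/D\rceil$ consecutive vertices the total angular deficit is at most $\pi/4$, and an induction along the vertices of a window---using the subadditivity $\beta_{m+i+1}\le A_{i+1}+\pi-A_i$ of Alexandrov angles and the law of cosines in Euclidean comparison triangles---gives $d(\gamma(s),\gamma(t))\ge\bigl|\cos(3\pi/4)\bigr|\,|s-t|=\tfrac{\sqrt2}{2}|s-t|$. To repair your write-up, switch to the lion's path, prove the angle claim, and carry out this induction; your radial/tangential decomposition is then subsumed by the comparison cosine estimate.
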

\begin{proof}
Suppose the man wins. For $n \in \N$ with $n \ge 1$, denote $\beta_{n} = \angle_{L_n}(L_{n-1},L_{n+1}) = \angle_{L_n}(L_{n-1},M_n)$. The following claim is also justified in \cite[p. 281]{AleBisGhr10}.
\begin{claim} \label{prop-man-ray-cl1}
$\displaystyle \lim_{n \to \infty} \beta_n = \pi$.
\end{claim}
\begin{proof}[Proof of Claim \ref{prop-man-ray-cl1}]
Let $\alpha_{n} =\angle_{L_n}(M_{n-1},M_n)$ for $n \ge 1$. Since the man wins, there exists $\alpha > 0$ such that $\lim_{n \to \infty} D_n = D + \alpha$. Moreover, $\lim_{n \to \infty}d(L_{n},M_{n-1}) = \alpha$. For the geodesic triangle $\Delta(L_n,M_{n-1},M_n)$, consider a comparison triangle $\overline{\Delta}(\overline{L}_n,\overline{M}_{n-1},\overline{M}_n)$ in $\R^2$ and denote its interior angle at $\overline{L}_n$ by $\overline{\alpha}_{n}$. As $d(M_{n-1},M_n) \le D$ for all $n \ge 1$, it follows that $\lim_{n \to \infty}\overline{\alpha}_{n} = 0$. Because $\alpha_n \le \overline{\alpha}_{n}$ for all $n \ge 1$, we have $\lim_{n \to \infty} \alpha_n = 0$. This implies $\lim_{n \to \infty} \beta_n = \pi$ as $\pi \le \beta_n + \alpha_n$.
\end{proof}

Let $k>0$. Then there exists $n_k \in \N$ such that for all $n\geq n_k$, 
\begin{equation}\label{prop-man-ray-eq1}
\beta_{n+1}\geq \pi - \frac{\pi}{4\left\lceil k/D \right\rceil }.
\end{equation}
Define $\gamma : [0,\infty) \to A$,
\[\gamma(t) = \left(1- \frac{t-nD}{D}\right)L_{n_k + n} + \frac{t-nD}{D}L_{n_k + n+1}, \quad \text{for } nD \le t < (n+1)D, \text{ where } n \in \N.\]
Note that $\gamma(nD) = L_{n_k +n}$ for all $n \in \N$. We show that $\gamma$ is a $k$-local $\sqrt{2}$-quasi-geodesic ray. Applying the triangle inequality,
\[d(\gamma(s),\gamma(t)) \le |s-t| \le \sqrt{2}|s-t|,\] 
for all $s,t \ge 0$. Thus, we only need to prove the following property.

\begin{claim} \label{prop-man-ray-cl2} For all $s,t \ge 0$ with $|s-t| \le k$,
\[d(\gamma(s),\gamma(t)) \ge \frac{\sqrt{2}}{2}|t-s|.\]
\end{claim}
\begin{proof}[Proof of Claim \ref{prop-man-ray-cl2}]
\end{proof}
First observe that for all $n\geq n_k$, by \eqref{prop-man-ray-eq1},
\[\left(\left\lceil\frac{k}{D}\right\rceil - 1\right)\pi + \frac{3\pi}{4} = \left\lceil \frac{k}{ D} \right\rceil\left(\pi - \frac{\pi}{4\left\lceil k/ D \right\rceil}\right) \le \sum_{i=1}^{\left\lceil k/D \right\rceil} \beta_{n+i} \le \left\lceil \frac{k}{ D} \right\rceil \pi.\]
For $n\geq n_k$ take 
\[B_n = \sum_{i=1}^{\left\lceil k/D \right\rceil} \beta_{n+i}- \left(\left\lceil \frac{k}{ D} \right\rceil -1\right)\pi.\] 
Then $ 3\pi/4 \le B_n \le \pi$ and so $\cos B_n \le -\sqrt{2}/2$.

Let $s,t \ge 0$ such that $0<  t-s \le k$. Fix $n\in \N$ such that $nD \le s < (n+1)D$ and $nD < t < \left(n+1+\left\lceil k/D \right\rceil\right) D$.  Denote $m=n_k +n$ and $B = B_m$. Then $\gamma(s) \in [L_m,L_{m+1}]$ with $s \ne L_{m+1}$,
\begin{equation}\label{prop-man-ray-eq2}
\cos B \le -\frac{\sqrt{2}}{2}.
\end{equation}
and 
\[B \leq \beta_{m+1} + \ldots + \beta_{m+i} - (i-1) \pi,\] 
for all $ i \in \left\{1,\ldots ,\left\lceil k/ D \right\rceil\right\}$.

We prove Claim \ref{prop-man-ray-cl2} by showing that $d(\gamma(s),\gamma(t)) \ge |\cos B| (t-s)$. Depending on the value of $t$ we distinguish several situations. For clarity, in each case we will index $t$.

Case I: $nD < t \le (n+1)D$. Denote $t_0 = t$.

Then $d(\gamma(s),\gamma(t_0))= t_0-s \ge  |\cos B|(t_0-s)$. 

Case II: $(n+1)D < t \le (n+2)D$. Denote $t_1 = t$.

Let $b_1 = d(\gamma(s),L_{m+1}) = (n+1)D - s$, $c_1 =  d(\gamma(t_1),L_{m+1}) = t_1 - (n+1)D$ and 
\[A_1 = \angle_{L_{m+1}}(\gamma(s),L_{m+2}) = \beta_{m+1} \ge B.\]
Consider a triangle $\Delta(x,y,z)$ in $\R^2$ so that $\|x-y\| = b_1$, $\|x-z\|=c_1$, and the interior angle at $x$ equals $A_1$. Since $X$ is a $\CAT(0)$ space, $d(\gamma(s),\gamma(t_1)) \ge \|y-z\|$. Applying the cosine law in $\R^2$ we get
\begin{align*} 
\|y-z\|^2 & = b_1^2 + c_1^2 - 2b_1c_1\cos A_1 \ge b_1^2 + c_1^2 - 2b_1c_1\cos B = b_1^2 + c_1^2 + 2b_1 c_1 |\cos B| \quad \text{by }\eqref{prop-man-ray-eq2}\\
& \geq \vert \cos B\vert (b_1 +c_1 )^2 = |\cos B|(t_1 -s)^2 \ge  |\cos B|^2(t_1 -s)^2.
\end{align*}
Hence, $d(\gamma(s),\gamma(t_1)) \ge  |\cos B|(t_1-s)$.

Case III: In general, assume $\left\lceil k/D \right\rceil >1$ and suppose that for $i\in \{1, \ldots, \left\lceil k/D \right\rceil - 1\}$,
\[A_i = \angle_ {L_{m+i}} (\gamma (s),L_{m+i+1})\geq  \beta_{m+1}+ \cdots +\beta_{m+i}- (i-1) \pi\]
and if $(n+i)D < t_i \le (n+i+1)D$,
\begin{equation}\label{prop-man-ray-eq3}
d(\gamma (s),\gamma (t_i ))\ge |\cos B|(t_i -s).
\end{equation}
We show that 
\[A_{i+1} = \angle_ {L_{m+i+1}} (\gamma (s),L_{m+i+2})\geq  \beta_{m+1}+ \cdots +\beta_{m+i+1}- i\pi\]
and if $(n+i+1)D < t_{i +1} \le (n+i+2)D$, 
\[d(\gamma (s),\gamma(t_{i +1}))\ge |\cos B|(t_{i +1}-s).\]

Because $\beta_{m+i+1} \le A_{i+1} + \angle_{L_{m+i+1}}(\gamma(s),L_{m+i}) \le A_{i+1} + \pi - A_i$ we have
\[A_{i+1} \ge \beta_{m+i+1} + A_i - \pi \ge \beta_{m+1}+ \cdots +\beta_{m+i+1}- i\pi \ge B.\]
Let $b_{i+1} = d(\gamma(s),L_{m+i+1}) \ge |\cos B|((n+i+1)D-s)$, where the last inequality follows by applying \eqref{prop-man-ray-eq3} with $t_i = (n+i+1)D$. Take also $c_{i+1} =  d(\gamma(t_{i+1}),L_{m+i+1}) = t_{i+1} - (n+i+1)D$. Consider a triangle $\Delta(x,y,z)$ in $\R^2$ so that $\|x-y\| = b_{i+1}$, $\|x-z\|=c_{i+1}$, and the interior angle at $x$ equals $A_{i+1}$. Then
\begin{align*} 
d(\gamma(s),\gamma(t_{i+1}))^2 &\ge \|y-z\|^2 = b_{i+1}^2 + c_{i+1}^2 - 2b_{i+1}c_{i+1}\cos A_{i+1}\ge b_{i+1}^2 + c_{i+1}^2 - 2b_{i+1}c_{i+1}\cos B\\
& = b_{i+1}^2 + c_{i+1}^2 + 2b_{i+1}c_{i+1}|\cos B| \ge b_{i+1}^2 + |\cos B|^2c_{i+1}^2 + 2b_{i+1}c_{i+1}|\cos B|\\
& = \left(b_{i+1} + |\cos B|c_{i+1}\right)^2\ge  |\cos B|^2(t_{i+1} -s)^2.
\end{align*}
Hence, $d(\gamma(s),\gamma(t_{i+1})) \ge  |\cos B|(t_{i+1}-s)$. This finishes the proof of the claim.
\end{proof}

We can now state our main result.
\begin{theorem}\label{main}
Let $A$ be a nonempty, closed and convex subset of a  complete $\CAT(0)$ space that is additionally $\delta$-hyperbolic. Then the following statements are equivalent:

\begin{itemize}
\item[(i)] $A$ is geodesically bounded;
\item[(ii)] $A$ is directionally bounded;
\item[(iii)] $A$ has the AFPP (for nonexpansive mappings);
\item[(iv)] $A$ has the FPP (for nonexpansive mappings);
\item[(v)] the lion always wins the Lion-Man game played in $A$. 
\end{itemize}
\end{theorem}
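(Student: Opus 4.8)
The plan is to prove the cycle of implications $(i)\Rightarrow(ii)\Rightarrow(iii)\Rightarrow(iv)\Rightarrow(v)\Rightarrow(i)$ (or a suitable variant), drawing almost entirely on results already established in the excerpt. Since $X$ is a complete $\CAT(0)$ space that is $\delta$-hyperbolic, it is in particular a complete Busemann convex $\delta$-hyperbolic space, so all of Propositions \ref{thm-geod-dir-bd}, \ref{prop-quasi-ray}, \ref{thm-geod-dir-bd-Busemann}, Corollary \ref{cor-local-quasi-ray}, Theorem \ref{thm-lion-dir-bd} and Proposition \ref{prop-man-ray} are available.

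First, $(i)\Leftrightarrow(ii)$: one direction is trivial since every geodesic ray is a directional curve, and the converse is exactly Proposition \ref{thm-geod-dir-bd} (or Proposition \ref{thm-geod-dir-bd-Busemann}). Next, $(ii)\Rightarrow(iii)$ is Shafrir's theorem \cite[Theorem 2.4]{Sha90}, applicable since complete $\CAT(0)$ spaces are complete Busemann convex; and $(iii)\Rightarrow(ii)$ is the reverse implication of the same characterization, so in fact $(ii)\Leftrightarrow(iii)$. The implication $(iii)\Rightarrow(iv)$ is immediate from the definitions (a fixed point is in particular an approximate fixed point, so FPP $\Rightarrow$ AFPP — wait, I need the other direction). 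Here I would instead invoke the known characterization \cite[Corollary 3.2]{Pia18} mentioned in the text: in a complete $\CAT(0)$ space that is additionally $\delta$-hyperbolic, a closed convex set has the FPP if and only if it is geodesically bounded; this gives $(i)\Leftrightarrow(iv)$ directly. That already closes a large part of the equivalence, and one always has $(iv)\Rightarrow(iii)$ trivially, making $(iii)$ equivalent to the rest.

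It remains to incorporate statement $(v)$. For $(v)\Rightarrow(ii)$ I use Theorem \ref{thm-lion-dir-bd}: if the lion always wins then $A$ is directionally bounded. For the reverse direction I would prove $(i)\Rightarrow(v)$, or rather its contrapositive: if the man can win the Lion-Man game in $A$, then $A$ is not geodesically bounded. So suppose the man wins. By Proposition \ref{prop-man-ray}, for every $k>0$ there is a $k$-local $\sqrt{2}$-quasi-geodesic ray in $A$. Choosing $\lambda=\sqrt{2}$ and letting $M=M(\delta,\sqrt{2})$ be the slimness constant from Remark \ref{rmk-quasi-slim}, pick any $k>8\sqrt{2}\,M$; then Corollary \ref{cor-local-quasi-ray} (applicable because $A$ is closed and convex in a complete Busemann convex $\delta$-hyperbolic space) yields a genuine geodesic ray in $A$, contradicting geodesic boundedness. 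This closes the loop: $(i)\Rightarrow(v)$ and $(v)\Rightarrow(ii)\Leftrightarrow(i)$.

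The main obstacle is not any single hard argument but rather bookkeeping: making sure the quoted external results ($\CAT(0)$ being Busemann convex, Shafrir's AFPP characterization, the Pi\k{a}tek FPP characterization) are stated with hypotheses matching the present setting, and that the chain of implications actually forms a single cycle through all five statements rather than several disconnected two-way equivalences. Concretely I would organize the write-up as: $(i)\Leftrightarrow(ii)$ via Proposition \ref{thm-geod-dir-bd}; $(ii)\Leftrightarrow(iii)$ via \cite[Theorem 2.4]{Sha90}; $(i)\Leftrightarrow(iv)$ via \cite[Corollary 3.2]{Pia18}; $(v)\Rightarrow(ii)$ via Theorem \ref{thm-lion-dir-bd}; and $(i)\Rightarrow(v)$ via Proposition \ref{prop-man-ray} together with Corollary \ref{cor-local-quasi-ray}. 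The only genuinely new content relative to the earlier sections is this last step — converting the man's winning strategy into a geodesic ray through the local-to-global quasi-geodesic machinery — and that is precisely where Corollary \ref{cor-local-quasi-ray} is designed to be applied, so the argument should be short.
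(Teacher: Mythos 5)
Your proposal is correct and follows essentially the same route as the paper's own proof: $(i)\Leftrightarrow(ii)$ via Proposition \ref{thm-geod-dir-bd}, $(ii)\Leftrightarrow(iii)$ via Shafrir, $(i)\Leftrightarrow(iv)$ via Pi\k{a}tek's characterization, $(v)\Rightarrow(ii)$ via Theorem \ref{thm-lion-dir-bd}, and $(i)\Rightarrow(v)$ by combining Proposition \ref{prop-man-ray} with Corollary \ref{cor-local-quasi-ray} for $k>8\sqrt{2}M(\delta,\sqrt{2})$. The momentary confusion about $(iii)\Rightarrow(iv)$ is resolved correctly by your switch to the external FPP characterization, which is exactly what the paper does.
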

\begin{proof}
$(i) \Longleftrightarrow (ii)$: Follows from Proposition \ref{thm-geod-dir-bd}.

$(ii) \Longleftrightarrow (iii)$: Follows from \cite[Theorem 2.4]{Sha90}.

$(i) \Longleftrightarrow (iv)$: Follows from \cite[Theorem 3.1]{Pia18}.

$(v) \Longrightarrow (ii)$: Follows from Theorem \ref{thm-lion-dir-bd}.

$(i) \Longrightarrow (v)$: Suppose that $A$ is geodesically bounded and that the man wins. Let $k > 8\sqrt{2}M$, where $M=M(\delta,\sqrt{2})$ is given by Remark \ref{rmk-quasi-slim}. By Proposition \ref{prop-man-ray}, $A$ contains a $k$-local $\sqrt{2}$-quasi-geodesic ray. Using Corollary \ref{cor-local-quasi-ray} we obtain that $A$ contains a geodesic ray, a contradiction. 
\end{proof}

\subsection*{Final remarks and conclusions}
1. In \cite{LopNicPia18} we proved that in the setting of complete, locally compact, uniquely geodesic spaces, if the Lion-Man game is played in a closed and strongly convex domain $A$, then the lion always wins if and only if $A$ has the fixed point property for continuous mapping (i.e., every continuous self-mapping defined on $A$ has at least one fixed point). This equivalence is no longer true if the local compactness assumption is dropped because, on the one hand, if the Lion-Man game is played in a convex and bounded subset of a Hilbert space, then the lion always wins (see \cite{KohLopNic18}). On the other hand, the unit ball of a Hilbert space has the fixed point property for continuous mappings if and only if the space is finite dimensional.

Regarding the relation between the FPP (for nonexpansive mappings) and the solution of the Lion-Man game, it would be interesting to know if the $\delta$-hyperbolic condition could be removed from Theorem \ref{main}. A first approach to this problem might be to consider the Hilbert framework. Recall that, according to \cite{Ray80}, boundedness is a necessary and sufficient condition for a closed and convex subset of a Hilbert space to have the FPP. Thus  we can raise the following problem.
\begin{question}
If the lion always wins the Lion-Man game played in a closed and convex subset $A$ of a Hilbert space, must $A$ be bounded?
\end{question} 

\noindent 2. In this paper we considered an $\varepsilon$-capture criterion. Even in compact and convex subsets of the Euclidean plane, there exist games of this type where the lion wins by satisfying the condition $D_n > D$ for all $n \in \N$ and $\lim_{n\to \infty}D_n = D$ (see \cite[Example 1]{LopNicPia18}). Thus, for physical capture (i.e., the case when there exists $n_0 \in \N$ such that $D_{n_0} \le D$) we must assume some very rigid geometric conditions. This is the case when the domain of the game is a convex and geodesically bounded subset of an ${\R}$-tree. 

To see this, suppose that $D_n > D$ for all $n \in \N$. Note first that $d(L_n,L_{n+1}) = D$ for all $n \in \N$. We show that for every $n \in \N$, 
\begin{equation}\label{thm-R-tree-lion-eq1}
L_n \in [L_0, L_{n+1}].
\end{equation}
In this case, $L_i \in [L_0,L_{n}]$ for all $n \in \N$ and all $i \le n$. Therefore, $d(L_0, L_{n}) = nD$ for all $n \in \N$ and $\bigcup_{n \ge 0}[L_0,L_n]$ is a geodesic ray in $A$, which is a contradiction.

To prove that \eqref{thm-R-tree-lion-eq1} holds we use an inductive argument. For $n=0$ this is obvious. We suppose now that \eqref{thm-R-tree-lion-eq1} holds for $n=k$ and prove that it also holds for $n=k+1$. 

First note that as $L_k \in [L_0, L_{k+1}]$ and $L_{k+1} \in [L_k,M_k]$, we have $L_k, L_{k+1} \in [L_0,M_k]$. Take now $y \in A$ such that $[M_k,L_0] \cap [M_k,M_{k+1}] = [M_k,y]$. If $L_{k+1} \in [M_k,y]$, then $L_{k+1} \in [M_k,M_{k+1}]$ and so $d(L_{k+1},M_{k+1}) \le d(M_k,M_{k+1}) \le D$, which contradicts the assumption that $D_{k+1} > D$. Thus, $L_{k+1} \in [L_0,y]$.

It is easy to see that $y \in [L_0, M_{k+1}]$. Otherwise, if $[L_0,y] \cap [y,M_{k+1}] = [y,z]$ for zome $z \in A$, $z \ne y$, then $z \in [M_k,L_0] \cap [M_k,M_{k+1}] = [M_k,y]$. This is a contradiction because $z \in [y,M_{k+1}]$.

Because $L_{k+1} \in [L_0,y]$, we get $L_{k+1} \in [L_0, M_{k+1}]$. Recalling that $L_{k+2} \in [L_{k+1},M_{k+1}]$ we obtain \eqref{thm-R-tree-lion-eq1} for $n=k+1$.
\begin{question} 
What other geometric conditions imply physical capture?
\end{question}

\section{Acknowledgements}
This work was partially supported by DGES (Grant MTM2015-65242-C2-1P). \\

\end{document}